\pgfplotsset{compat=newest}
\let\bbordermatrix\bordermatrix
\patchcmd{\bbordermatrix}{\left(}{\left[}{}{}
\patchcmd{\bbordermatrix}{\right)}{\right]}{}{}
\newtheorem{tm}{Theorem} 
\newtheorem{lm}{Lemma} 
\newtheorem{cor}{Corollary}
\numberwithin{equation}{section}
\newcolumntype{C}[1]{>{\centering\arraybackslash$}m{#1}<{$}}
\newcolumntype{R}[1]{>{\raggedleft\arraybackslash$}m{#1}<{$}}
\newcolumntype{L}[1]{>{\raggedright\arraybackslash$}m{#1}<{$}}
\DeclarePairedDelimiter\floor{\lfloor}{\rfloor}
\DeclarePairedDelimiterX{\cif}[1]{(}{)}{\delimsize(#1\delimsize)}
\begin{document}
\title{Tilings of a Honeycomb Strip and Higher Order Fibonacci Numbers}

\author{Tomislav Došlić$^{1,2}$, Luka Podrug$^1,$\footnote{Corresponding author.}\\ \\
{\small     $^1$ University of Zagreb, Faculty of Civil Engineering}\\
{\small     Ka\v{c}i\'ceva 26, 10000 Zagreb, Croatia} \\
{\small $^2$ Faculty of Information Studies, Novo mesto, Slovenia} \\ \\
{\small   \text{\{\tt tomislav.doslic,luka.podrug\}@grad.unizg.hr}}
}
\date{}
\maketitle

\begin{abstract}
\noindent
In this paper we explore two types of tilings of a honeycomb strip and
derive some closed form formulas for the number of tilings. Furthermore,
we obtain some new identities involving tribonacci numbers, Padovan numbers
and Narayana's cow sequence and provide combinatorial proofs for several
known identities about those numbers.

\vspace{4mm}

\noindent{\bf Keywords}: tilings; hexagonal lattice; tribonacci numbers;
                         tetranacci numbers; Padovan numbers; Fibonacci
                         numbers; Narayana's cow sequence
\end{abstract}

\section{Introduction}

Tilings or tesselations have been attracting human attention since the time
immemorial. They appear as natural solutions of many practical problems and
their aesthetic appeal motivates the interest that goes way
beyond the limits of their practical relevance. In mathematics, 
tiling-related problems appear in almost all areas, ranging from purely
recreational settings of plane geometry all the way to the deep questions of 
eigenvalue count asymptotics for boundary-value problems in higher-dimensional
spaces \cite{polya54,polya61}. Many of those problems, formulated in simple
and intuitive terms and seemingly innocuous, quickly turn out to be quite
intractable in their generality. That motivates interest in their restricted
versions that might be more accessible. In this paper we look at several such
restricted problems when the area being tiled has a given structure and the
allowed tiles belong to a small set of given shapes. In particular, we
consider problems of tiling a narrow strip of the hexagonal lattice in the 
plane by several types of tiles made of regular hexagons. Similar problems
for strips in square and triangular lattices have been considered in several
recent papers \cite{BBKSW,Ziqian,DT,KS}.

The substrate (i.e., the area to be tiled) is a honeycomb strip $H_n$ composed of $n$ regular hexagons arranged in two rows in which the hexagons are numbered starting from the bottom left corner, as shown in Figure \ref{grid1}
for $n=12$. The number of hexagons in the strip will be called its length.
The choice of the substrate might seem arbitrary, but it provides a neat
visual model for a linear array of locally interacting units with additional
longer range connections: The inner dual of a strip of length $n$ is, in
fact, $P_n^2$, the path on $n$ vertices with edges between all vertices at
distance 2 in $P_n$. Another way to look at it is as the ladder graph
with descending diagonals, another familiar structure. Clearly, tilings
with monomers and dimers in the strip correspond to matchings in its
inner dual, thus enabling us to directly transfer known results about
matchings into our context. We refer the reader to the classical monograph
by Lov\'asz and Plummer \cite{LP} for all necessary details on matchings.

\begin{figure}[h!]
\centering

\begin{tikzpicture}[scale=0.55]
\coordinate (A) at (0,1);
\coordinate (B) at ({-sqrt(3)/2},0.5);
\coordinate (C) at ({-sqrt(3)/2},-0.5);   
\coordinate (D) at (0,-1);
\coordinate (E) at ({sqrt(3)/2},-0.5);
\coordinate (F) at ({sqrt(3)/2},0.5);

\coordinate (A1) at ({sqrt(3)},1);
\coordinate (B1) at ({sqrt(3)/2},0.5);
\coordinate (C1) at ({sqrt(3)/2},-0.5);   
\coordinate (D1) at ({sqrt(3)},-1);
\coordinate (E1) at ({3*sqrt(3)/2},-0.5);
\coordinate (F1) at ({3*sqrt(3)/2},0.5); 

\coordinate (A2) at ({2*sqrt(3)},1);
\coordinate (B2) at ({3*sqrt(3)/2},0.5);
\coordinate (C2) at ({3*sqrt(3)/2},-0.5);   
\coordinate (D2) at ({2*sqrt(3)},-1);
\coordinate (E2) at ({5*sqrt(3)/2},-0.5);
\coordinate (F2) at ({5*sqrt(3)/2},0.5); 

\coordinate (A3) at ({3*sqrt(3)},1);
\coordinate (B3) at ({5*sqrt(3)/2},0.5);
\coordinate (C3) at ({5*sqrt(3)/2},-0.5);   
\coordinate (D3) at ({3*sqrt(3)},-1);
\coordinate (E3) at ({7*sqrt(3)/2},-0.5);
\coordinate (F3) at ({7*sqrt(3)/2},0.5);

\coordinate (A4) at ({4*sqrt(3)},1);
\coordinate (B4) at ({7*sqrt(3)/2},0.5);
\coordinate (C4) at ({7*sqrt(3)/2},-0.5);   
\coordinate (D4) at ({4*sqrt(3)},-1);
\coordinate (E4) at ({9*sqrt(3)/2},-0.5);4
\coordinate (F4) at ({9*sqrt(3)/2},0.5);

\coordinate (A5) at ({5*sqrt(3)},1);
\coordinate (B5) at ({9*sqrt(3)/2},0.5);
\coordinate (C5) at ({9*sqrt(3)/2},-0.5);   
\coordinate (D5) at ({5*sqrt(3)},-1);
\coordinate (E5) at ({11*sqrt(3)/2},-0.5);4
\coordinate (F5) at ({11*sqrt(3)/2},0.5);

\coordinate (A6) at ({6*sqrt(3)},1);
\coordinate (B6) at ({11*sqrt(3)/2},0.5);
\coordinate (C6) at ({11*sqrt(3)/2},-0.5);   
\coordinate (D6) at ({6*sqrt(3)},-1);
\coordinate (E6) at ({13*sqrt(3)/2},-0.5);4
\coordinate (F6) at ({13*sqrt(3)/2},0.5);

\coordinate (G) at ({sqrt(3)/2},-0.5);
\coordinate (H) at ({0},-1);
\coordinate (I) at ({0},-2);   
\coordinate (J) at ({sqrt(3)/2},-2.5);
\coordinate (K) at ({sqrt(3)},-2);
\coordinate (L) at ({sqrt(3)},-1);

\coordinate (G1) at ({3*sqrt(3)/2},-0.5);
\coordinate (H1) at ({{sqrt(3)}},-1);
\coordinate (I1) at ({{sqrt(3)}},-2);   
\coordinate (J1) at ({3*sqrt(3)/2},-2.5);
\coordinate (K1) at ({2*sqrt(3)},-2);
\coordinate (L1) at ({2*sqrt(3)},-1);

\coordinate (G2) at ({5*sqrt(3)/2},-0.5);
\coordinate (H2) at ({{2*sqrt(3)}},-1);
\coordinate (I2) at ({{2*sqrt(3)}},-2);   
\coordinate (J2) at ({5*sqrt(3)/2},-2.5);
\coordinate (K2) at ({3*sqrt(3)},-2);
\coordinate (L2) at ({3*sqrt(3)},-1);

\coordinate (G3) at ({7*sqrt(3)/2},-0.5);
\coordinate (H3) at ({{3*sqrt(3)}},-1);
\coordinate (I3) at ({{3*sqrt(3)}},-2);   
\coordinate (J3) at ({7*sqrt(3)/2},-2.5);
\coordinate (K3) at ({4*sqrt(3)},-2);
\coordinate (L3) at ({4*sqrt(3)},-1);

\coordinate (G4) at ({9*sqrt(3)/2},-0.5);
\coordinate (H4) at ({{4*sqrt(3)}},-1);
\coordinate (I4) at ({{4*sqrt(3)}},-2);   
\coordinate (J4) at ({9*sqrt(3)/2},-2.5);
\coordinate (K4) at ({5*sqrt(3)},-2);
\coordinate (L4) at ({5*sqrt(3)},-1);

\coordinate (G5) at ({11*sqrt(3)/2},-0.5);
\coordinate (H5) at ({{5*sqrt(3)}},-1);
\coordinate (I5) at ({{5*sqrt(3)}},-2);   
\coordinate (J5) at ({11*sqrt(3)/2},-2.5);
\coordinate (K5) at ({6*sqrt(3)},-2);
\coordinate (L5) at ({6*sqrt(3)},-1);

\draw [line width=0.25mm] (G)--(H)--(I)--(J)--(K)--(L)--(G); 
\draw [line width=0.25mm] (A1)--(B1)--(C1)--(D1)--(I1)-- (J1)--(K1)--(L1)--(G1)--(F1)--(A1);
\draw [line width=0.25mm] (B2)--(A2)--(F2)--(A3)--(F3)--(E3)--(D3)--(C3)--(D2)--(C2)--(B2)--(A2); 
\draw [line width=0.25mm] (G2)--(H2)--(I2)--(J2)--(K2)--(L2)--(G2); 
\draw [line width=0.25mm] (G3)--(H3)--(I3)--(J3)--(K3)--(J4)--(K4)--(L4)--(G4)--(H4)--(G3); 
\draw [line width=0.25mm] (A5)--(B5)--(C5)--(D5)--(I5)--(J5)--(K5)--(L5)--(G5)--(F5)--(A5); 
\draw [line width=0.25mm] (A4)--(B4)--(C4)--(D4)--(E4)--(F4)--(A4); 
\draw [line width=0.25mm] (A6)--(B6)--(C6)--(D6)--(E6)--(F6)--(A6); 

\draw[dashed,opacity=0.4] (D)--(E) (D1)--(E1) (E2)--(F2) (K3)--(L3) (D5)--(E5);

\node[] (p) at ({sqrt(3)},0) {2};
\node[] (p) at ({2*sqrt(3)},0) {4};
\node[] (p) at ({3*sqrt(3)},0) {6};
\node[] (p) at ({4*sqrt(3)},0) {8};
\node[] (p) at ({5*sqrt(3)},0) {10};
\node[] (p) at ({6*sqrt(3)},0) {12};
\node[] (p) at ({sqrt(3)/2},-1.5) {1};
\node[] (p) at ({3*sqrt(3)/2},-1.5) {3};
\node[] (p) at ({5*sqrt(3)/2},-1.5) {5};
\node[] (p) at ({7*sqrt(3)/2},-1.5) {7};
\node[] (p) at ({9*sqrt(3)/2},-1.5) {9};
\node[] (p) at ({11*sqrt(3)/2},-1.5) {11};
\end{tikzpicture}  
\caption{A tiling of a honeycomb strip of length 12 using 4 dimers.}
\label{grid1}
\end{figure}
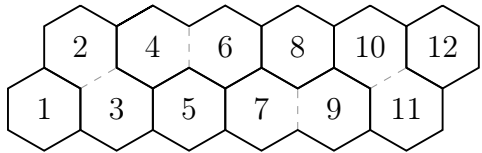 

We start by examining the tilings of such strips by monomers (i.e., single
hexagons) and dimers made of two hexagons joined along an edge. Such
tilings have been considered recently by Dresden and Ziqian \cite{Ziqian},
who found that the total number of such tilings is given by the 
tetranacci numbers. We refine their results in several ways. First, in
Section 2, we obtain formula for the number of such tilings with a
specified number of dimers. Then we consider tilings with colored monomers
and dimers in Section 3. Along the way we obtain combinatorial proofs for
generalizations of several identities involving tetranacci numbers from
the paper by Dresden and Ziqian; we present them in Section 4. Section 5
is devoted to another type of restricted tilings of the honeycomb strip.
There we prohibit horizontal dimers but allow trimers of consecutive
hexagons. The total number of such tilings is given in terms of tribonacci 
numbers, and Padovan and Narayana's cow numbers appear as special cases.
Combinatorial proofs of some related identities are presented in Section 6. 
The paper is closed by some remarks listing some open problems and 
indicating several possible directions for future work.

\section{Tiling a honeycomb strip with exactly $k$ dimers}


In this section we consider a honeycomb strip of length $n$ and its tilings
by hexagonal monomers and dimers shown in Figure \ref{tiles}. We are interested
in the number of such tilings with a given number of dimers. The dimers
can be in any position; in Figure \ref{tiles} we see a descending, a horizontal and
an ascending dimer, from left to right, respectively. Ascending and descending
dimers will be both called slanted when their exact orientation is not
important. We denote the number of all possible tilings of a honeycomb
strip of length $n$ using exactly $k$ dimers by $c_{n,k}$.

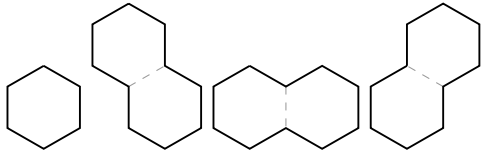
\begin{figure}[h!]
\centering \begin{tikzpicture}[scale=0.55]
\draw [line width=0.25mm] (A) -- (B) -- (C) -- (D) -- (E) -- (F) --(A); 
\end{tikzpicture} \begin{tikzpicture}[scale=0.55]
\draw [line width=0.25mm] (A)--(B)--(C)--(D)--(I)--(J)--(K)--(L)--(G)--(F)--(A); 
\draw [dashed,opacity=0.4] (D)--(E); 
   
\end{tikzpicture} \begin{tikzpicture}[scale=0.55]
\draw [line width=0.25mm] (A)--(B)--(C)--(D)--(E)--(D1)--(E1)--(F1)--(A1)--(B1)--(A); 
\draw [dashed,opacity=0.4] (F)--(E); 
\end{tikzpicture}  \begin{tikzpicture}[scale=0.55]

\draw [line width=0.25mm] (G)--(H)--(I)--(J)--(K)--(L)--(E1)--(F1)--(A1)--(B1)--(G); 
\draw [dashed,opacity=0.4] (L)--(G); 
   
\end{tikzpicture} 
    \caption{Monomer and three possible positions of a dimer tile.}
\label{tiles}
\end{figure} 

Dresden and Ziqian \cite{Ziqian} proved that the total number of all possible
ways to tile a strip with monomers and dimers $h_n$ satisfies recursion
\begin{equation} h_n=h_{n-1}+h_{n-2}+h_{n-3}+h_{n-4}\end{equation} 
with initial values $h_1=1$, $h_2=2$, and $h_3=4$. It makes sense to
define $h_0 = 1$, accounting for the only possible tiling (the empty one)
of the empty honeycomb strip. Their recurrence is the same as the 
recurrence for the tetranacci numbers $Q_n$ (A000078 in \cite{OEIS}) with
shifted initial values. Hence, $h_n=Q_{n+3}$. We wish to determine 
$c_{n,k}$, the number of such tilings using exactly $k$ dimers, and
hence $n-2k$ monomers. It is easy to
see that $c_{n,k}=0$ for $k>\floor*{\frac{n}{2}}$, since the strip with
$n$ hexagons can contain at most $\floor*{\frac{n}{2}}$ dimers. On the lower
end, there is only one tiling without dimers, so $c_{n,0} = 1$ for all $n$.
By stacking $k$ dimers at the beginning of the strip, it is always possible
to tile the remainder by monomers, so it follows that all $c_{n,k}$ for $k$ between $1$ and $\floor*{\frac{n}{2}}$ will be strictly positive. Hence
the numbers $c_{n,k}$ will be arranged in a triangular array without
internal zeros. In
table \ref{Initial Values Table} we give the list of initial values
that can be easily verified. 
\begin{table}[h!]\centering
$\begin{array}{cc}
c_{0,0}=1 \\
c_{1,0}=1 \\
c_{2,0}=1 & c_{2,1}=1 \\
c_{3,0}=1 & c_{3,1}=3 
\end{array}$ 
\caption{Initial values of $c_{n,k}$.}
\label{Initial Values Table}
\end{table} 
 
In the next theorem we give a recurrence relation for $c_{n,k}$. 

\begin{tm} \label{recursive c(n,k) theorem} Let $n \geq 4$ be an integer and
$c_{n,k}$ be the number of ways to tile a honeycomb strip of a length $n$ 
by using exactly $k$ dimers and $n-2k$ monomers. Then the numbers $c_{n,k}$
satisfy the recurrence relation
\begin{equation} 
c_{n,k}=c_{n-1,k}+c_{n-2,k-1}+c_{n-3,k-1}+c_{n-4,k-2} \label{recursive c_{n,k}}
\end{equation}
with the initial conditions given in Table \ref{Initial Values Table}.
\end{tm}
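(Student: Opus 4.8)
The plan is to argue combinatorially by conditioning on the tile covering the last hexagon of the strip, hexagon $n$. In the inner dual $P_n^2$ the hexagon $n$ is adjacent only to hexagons $n-1$ and $n-2$ (its would-be neighbours $n+1$ and $n+2$ do not exist), and a dimer joins only hexagons at distance $1$ or $2$ in $P_n$; hence the tile containing hexagon $n$ is exactly one of: (i) a monomer on hexagon $n$; (ii) a slanted dimer on hexagons $n-1$ and $n$; (iii) a horizontal dimer on hexagons $n-2$ and $n$. These three cases partition the tilings of $H_n$ with exactly $k$ dimers, so it suffices to count each class separately.

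First I would dispose of cases (i) and (ii), which are immediate: deleting the monomer leaves the strip $H_{n-1}$ still to be tiled with $k$ dimers (contributing $c_{n-1,k}$), and deleting the slanted dimer leaves $H_{n-2}$ to be tiled with $k-1$ dimers (contributing $c_{n-2,k-1}$). In both cases the truncation really does produce a shorter honeycomb strip with no boundary interaction, again because adjacencies reach only two steps back. The substantive case is (iii): once the horizontal dimer on $\{n-2,n\}$ is placed, hexagon $n-1$ is still uncovered and its only remaining neighbour is hexagon $n-3$, so $n-1$ is covered either by a monomer --- leaving $H_{n-3}$ to be tiled with $k-1$ dimers, hence $c_{n-3,k-1}$ --- or by the horizontal dimer on $\{n-3,n-1\}$ --- leaving $H_{n-4}$ to be tiled with $k-2$ dimers, hence $c_{n-4,k-2}$. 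Adding the four contributions gives \eqref{recursive c_{n,k}}.

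The proof is essentially a careful case analysis, so no single step is a genuine obstacle; the points that deserve attention are the exhaustiveness and pairwise disjointness of the cases (which reduce to the distance-$\le 2$ adjacency rule) and the degenerate instances, most notably $n=4$, where the strips $H_0$ and $H_1$ occur. These are absorbed by the conventions $c_{0,0}=1$ and $c_{m,j}=0$ for $j<0$ or $j>\lfloor m/2\rfloor$, together with the values in Table~\ref{Initial Values Table}, which make each boundary term vanish precisely when the corresponding configuration is impossible. As a consistency check, summing \eqref{recursive c_{n,k}} over all $k$ returns the Dresden--Ziqian recurrence $h_n=h_{n-1}+h_{n-2}+h_{n-3}+h_{n-4}$ for the total number of tilings.
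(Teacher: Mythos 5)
Your proposal is correct and follows essentially the same argument as the paper: conditioning on the tile covering hexagon $n$ (monomer, slanted dimer, or horizontal dimer on $\{n-2,n\}$), and in the last case subcasing on whether hexagon $n-1$ carries a monomer or the horizontal dimer on $\{n-3,n-1\}$, yielding the four terms of \eqref{recursive c_{n,k}}. The attention you pay to disjointness, exhaustiveness, and the boundary cases at $n=4$ only makes explicit what the paper leaves implicit.
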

\begin{proof}
We consider an arbitrary tiling which uses $k$ dimers and note that the
$n$-th hexagon can be tiled either by a dimer or by a monomer. The number
off all such tilings with the last hexagon tiled by a monomer is $c_{n-1,k}$,
since the number of dimers $k$ remains the same.
If the last hexagon is a part of a dimer, then we distinguish
two possible situations: either the dimer is slanted or it is horizontal.
The number of tilings ending in a slanted dimer is $c_{n-2,k-1}$, since
the last dimer increases the length of a strip by two and number of
dimers by one. If the dimer is horizontal, it means that it must cover the
$(n-2)$-nd and the $n$-th hexagon. In that case, we have two subcases: either
the $(n-1)$-st hexagon is tiled by monomer, and the rest of the strip
can be tiled in $c_{n-3,k-1}$ ways, or $(n-1)$-st hexagon forms a dimer
with $n-3$-rd hexagon, and rest of the strip can be tiled in $c_{n-4,k-2}$
ways. Described cases are illustrated in Figure \ref{All cases} from
left to right, respectively.
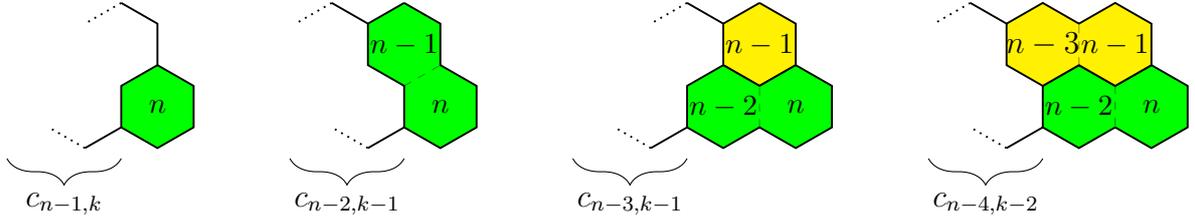
\begin{figure}[h!]
\centering \begin{tikzpicture}[scale=0.55]
\draw [line width=0.25mm,fill=green] (G1)--(H1)--(I1)--(J1)--(K1)--(L1)--(G1) ; 

\draw [line width=0.25mm]  (K)--(J) (E1)--(F1)--(A1); 
\draw [line width=0.25mm, dotted] (A1)--(B1) (J)--(I); 
\draw [decorate,decoration={brace,amplitude=10pt}]
 ({sqrt(3)},-2.75)--(-1,-2.75) node [black,midway,yshift=-0.6cm] 
{$c_{n-1,k}$};
  
\node[font=\small] (p) at ({3*sqrt(3)/2},-1.5) {$n$};
\end{tikzpicture} \hspace{0.9 cm}   \begin{tikzpicture}[scale=0.55]

\draw [line width=0.25mm,fill=green] (A1)--(B1)--(C1)--(D1)--(I1)--(J1)--(K1)--(L1)--(G1)--(F1)--(A1); 

\draw [line width=0.25mm]  (K)--(J) (F)--(A); 
\draw [line width=0.25mm, dotted] (A)--(B) (J)--(I); 

\draw [dashed,opacity=0.4] (D1)--(E1); 
   \draw [decorate,decoration={brace,amplitude=10pt}]
 ({sqrt(3)},-2.75)--(-1,-2.75) node [black,midway,yshift=-0.6cm] 
{$c_{n-2,k-1}$};

\node[font=\small] (p) at ({sqrt(3)},0) {$n-1$};
\node[font=\small] (p) at ({3*sqrt(3)/2},-1.5) {$n$};
\end{tikzpicture} \hspace{0.9 cm} \begin{tikzpicture}[scale=0.55]

\draw [line width=0.25mm, fill=yellow] (A2)--(B2)--(C2)--(D2)--(E2)--(F2)--(A2); 
\draw [line width=0.25mm,fill=green] (G1)--(H1)--(I1)--(J1)--(K1)--(J2)--(K2)--(L2)--(G2)--(H2)--(G1) ; 

\draw [line width=0.25mm]  (K)--(J) (F1)--(A1); 
\draw [line width=0.25mm, dotted] (A1)--(B1) (J)--(I); 

\draw [dashed,opacity=0.4] (K1)--(L1); 
    \draw [decorate,decoration={brace,amplitude=10pt}]
 ({sqrt(3)},-2.75)--(-1,-2.75) node [black,midway,yshift=-0.6cm] 
{$c_{n-3,k-1}$};
\node[font=\small] (p) at ({2*sqrt(3)},0) {$n-1$};
\node[font=\small] (p) at ({3*sqrt(3)/2},-1.5) {$n-2$};
\node[font=\small] (p) at ({5*sqrt(3)/2},-1.5) {$n$};
\end{tikzpicture} \hspace{0.9 cm}  \begin{tikzpicture}[scale=0.55]

\draw [line width=0.25mm,fill=green] (G1)--(H1)--(I1)--(J1)--(K1)--(J2)--(K2)--(L2)--(G2)--(H2)--(G1); 
\draw [line width=0.25mm,fill=yellow] (B1)--(C1)--(D1)--(E1)--(D2)--(E2)--(F2)--(A2)--(B2)--(A1)--(B1); 

\draw [line width=0.25mm]  (K)--(J) (F)--(A); 
\draw [line width=0.25mm, dotted] (A)--(B) (J)--(I); 

\draw [dashed,opacity=0.4] (L1)--(K1); 
\draw [dashed,opacity=0.4] (E1)--(F1); 

  \draw [decorate,decoration={brace,amplitude=10pt}]
 ({sqrt(3)},-2.75)--(-1,-2.75) node [black,midway,yshift=-0.6cm] 
{$c_{n-4,k-2}$};  
\node[] (p) at ({sqrt(3)},0) {$n-3$};
\node[font=\small] (p) at ({2*sqrt(3)},0) {$n-1$};
\node[font=\small] (p) at ({5*sqrt(3)/2},-1.5) {$n$};
\node[font=\small] (p) at ({3*sqrt(3)/2},-1.5) {$n-2$};
\end{tikzpicture} 
\caption{All possible endings of a tiled honeycomb strip of length $n$.}
\label{All cases}
\end{figure}  

Since the listed cases and subcases are disjoint and describe all possible situations,
the total number of tilings is the sum of the respective counting numbers
i.e., $c_{n,k}=c_{n-1,k}+c_{n-2,k-1}+c_{n-3,k-1}+c_{n-4,k-2}$, which proves
our theorem. 
\end{proof}

We are now able to list the initial rows of the triangle of $c_{n,k}$,
which we do in Table \ref{First values c_{n,k}} below.

\begin{table}[h!]
\centering
$\begin{array}{c|ccccccccccc}
n/k & 0 & 1 & 2 & 3 & 4 & 5 & 6 & \cdots& Q_n\\
\hline
0 & 1 &&&&&&&&1\\
1 & 1 &&&&&&&&1\\
2 & 1 & 1 &&&&&&&2\\
3 & 1 & 3& &&&&&&4\\
4 & 1 & 5 & 2 &&&&&&8\\ 
5 & 1 & 7 & 7 &&&&&&15\\
6 & 1 & 9 & 16 & 3 &&&&&29\\
7 & 1 & 11 & 29 & 15 &&&&&56\\
8 & 1 & 13 & 46 & 43 & 5 &&&&108\\
\end{array}$ 
\caption{The initial values of $c_{n,k}$.}
\label{First values c_{n,k}}
\end{table}

The triangle of table \ref{First values c_{n,k}} appears as the sequence
A101350 in \cite{OEIS}. Its leftmost column consists of all 1's, counting the
unique tilings without dimers. The second column seems to be given by
$c_{n,1} = 2n-3$. Indeed, the only dimer in the tiling can cover either
hexagons $(i,i+1)$ for $1 \leq i \leq n-1$ or hexagons $(i,i+2)$ for 
$1 \leq i \leq n-2$, resulting in $2n-3$ possible tilings. 
As expected, the rows of the triangle sum to the (shifted) tetranacci 
numbers, $\sum\limits_{k=0}^{\floor*{\frac{n}{2}}}c_{n,k}=Q_{n+3}$, since
by disregarding values $k$, recurrence \ref{recursive c_{n,k}} becomes the
defining recurrence for the tetranacci numbers. 
The appearance of the Fibonacci numbers as the rightmost diagonal,
$c_{2n,n} = F_{n+1}$, can be readily explained by looking at the inner dual
of the strip. As mentioned before, it is the ladder graph with the descending diagonal in each square, as shown in Figure \ref{Dual} .
Clearly, tilings with $n$ dimers correspond to perfect
\begin{figure}[h!]
\centering

\begin{tikzpicture}[scale=0.55]
\coordinate (S1) at ({sqrt(3)/2},-1.5); 
\coordinate (S2) at ({sqrt(3)},0); 
\coordinate (S3) at ({3*sqrt(3)/2},-1.5); 
\coordinate (S4) at ({2*sqrt(3)},0); 
\coordinate (S5) at ({5*sqrt(3)/2},-1.5); 
\coordinate (S6) at ({3*sqrt(3)},0); 
\coordinate (S7) at ({7*sqrt(3)/2},-1.5); 
\coordinate (S8) at ({4*sqrt(3)},0); 
\coordinate (Q1) at ({8*sqrt(3)},-1.5); 
\coordinate (Q2) at ({8*sqrt(3)},0); 
\coordinate (Q3) at ({8*sqrt(3)+1.5},-1.5); 
\coordinate (Q4) at ({8*sqrt(3)+1.5},0); 
\coordinate (Q5) at ({8*sqrt(3)+3},-1.5); 
\coordinate (Q6) at ({8*sqrt(3)+3},0); 
\coordinate (Q7) at ({8*sqrt(3)+4.5},-1.5); 
\coordinate (Q8) at ({8*sqrt(3)+4.5},0); 
\coordinate (T) at ({6*sqrt(3)},-0.75); 
\coordinate (T1) at ({2.5*sqrt(3)},-3.5); 
\coordinate (T2) at ({8*sqrt(3)+2.25},-3.5); 

\draw (T2)  node {$L_n$};
\draw (T1)  node {$H_{2n}$};
\draw (T)  node {$\longrightarrow$};
\draw [line width=0.25mm] (Q2)--(Q4)--(Q6)--(Q8)--(Q7)--(Q5)--(Q3)--(Q1)--(Q2)--(Q3)--(Q4)--(Q5)--(Q6)--(Q7); 

\draw [line width=0.25mm] (G)--(H)--(I)--(J)--(K)--(L)--(G); 
\draw [line width=0.25mm] (G1)--(H1)--(I1)--(J1)--(K1)--(L1)--(G1); 
\draw [line width=0.25mm] (G2)--(H2)--(I2)--(J2)--(K2)--(L2)--(G2); 
\draw [line width=0.25mm] (G3)--(H3)--(I3)--(J3)--(K3)--(L3)--(G3); 

\draw [line width=0.25mm] (A1)--(B1)--(C1)--(D1)--(E1)-- (F1)--(A1);
\draw [line width=0.25mm] (A2)--(B2)--(C2)--(D2)--(E2)--(F2)--(A2); 
\draw [line width=0.25mm] (A3)--(B3)--(C3)--(D3)--(E3)--(F3)--(A3); 
\draw [line width=0.25mm] (A4)--(B4)--(C4)--(D4)--(E4)--(F4)--(A4); 

\draw [line width=0.25mm] (S2)--(S4)--(S6)--(S8)--(S7)--(S5)--(S3)--(S1)--(S2)--(S3)--(S4)--(S5)--(S6)--(S7); 

\end{tikzpicture}  
\caption{Hexagonal strip of a length $2n$ and its inner dual} \label{Dual}
\end{figure}
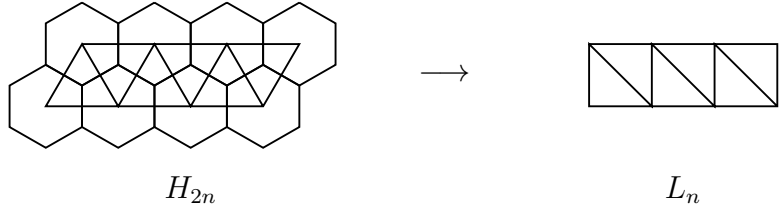 
matchings in the inner dual. A simple parity argument dictates that no
diagonal can participate in such a perfect matching. By omitting the diagonals
we are left with a ladder graph and it is a well known folklore result that
perfect matchings in ladder graphs are counted by Fibonacci numbers. Somewhat
less obvious is the appearance of the convolution of Fibonacci numbers
and shifted Fibonacci numbers as the first descending subdiagonal,
$c_{2n+1,n} = \frac{1}{5}((n+2)F_{n+4} + (n-1)F_{n+2}) = A023610(n)$, but
it follows by observing that the only monomer breaks the strip into two
pieces each of which can be tiled by dimers only, and the number of such
tilings is obtained by summing the corresponding products, hence leading
to convolution. There are no formulas in the OEIS for other columns or
diagonals. In the rest of this section we determine formulas for all
elements of the triangle $c_{n,k}$.

%
%

It is well known that for the Fibonacci numbers one has 
$c_{2n,n}=\sum\limits_{m=0}^n\binom{n-m}{m}$. By writing this as 
$c_{2n,n}=\sum\limits_{m=0}^n\binom{n-m}{m}\binom{n-m}{0}$ and by noting
that a similar formula 
$c_{2n+1,n}=\sum\limits_{m=0}^{n-1}\binom{n-m}{m}\binom{n-m}{1}$ can be
readily verified by induction, it becomes natural to consider
$\sum\limits_{m=0}^{n-k}\binom{n-m}{m}\binom{n-m}{k}$ as the formula
for the elements on descending diagonals. By shifting the indices 
$n\to n-k$ and $k\to n-2k$ one arrives at expression for $c_{n,k}$.

\begin{tm}\label{c_(n,k) formula theorem} The number of ways to tile a
honeycomb strip of length $n$ using $k$ dimers and $n-2k$ monomers
is equal to \begin{equation} 
c_{n,k}=\sum\limits_{m=0}^k\binom{n-k-m}{m}\binom{n-k-m}{n-2k}.
\label{c_(n,k) formula}
\end{equation} 
\end{tm}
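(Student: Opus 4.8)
The plan is to prove \eqref{c_(n,k) formula} by induction on $n$, showing that its right-hand side satisfies the same recurrence \eqref{recursive c_{n,k}} and the same initial data as $c_{n,k}$. Throughout I use the convention that $\binom{a}{b}=0$ unless $0\le b\le a$. Set
\[
f_{n,k}=\sum_{m\ge 0}\binom{n-k-m}{m}\binom{n-k-m}{n-2k};
\]
with this convention the sum is finite and agrees with the one in \eqref{c_(n,k) formula} (terms with $m>k$ vanish because their second factor does), and $f_{n,k}=0$ whenever $k<0$ or $k>\lfloor n/2\rfloor$, matching the vanishing of $c_{n,k}$ outside its triangular range. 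For the base cases $0\le n\le 3$ one evaluates $f_{n,k}$ and compares with Table \ref{Initial Values Table}: indeed $f_{n,0}=\binom{n}{0}\binom{n}{n}=1$, $f_{2,1}=\binom{1}{0}\binom{1}{0}+\binom{0}{1}\binom{0}{0}=1$, and $f_{3,1}=\binom{2}{0}\binom{2}{1}+\binom{1}{1}\binom{1}{1}=3$, which exhausts the nontrivial entries.

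For the inductive step, fix $n\ge 4$ and abbreviate $N=n-k$ (the number of tiles used) and $M=n-2k$ (the number of monomers), so that $f_{n,k}=G(N,M)$ where $G(N,M):=\sum_{m\ge 0}\binom{N-m}{m}\binom{N-m}{M}$. A short substitution in the indices shows that the four terms on the right-hand side of \eqref{recursive c_{n,k}} are, in this notation, $f_{n-1,k}=G(N-1,M-1)$, $f_{n-2,k-1}=G(N-1,M)$, $f_{n-3,k-1}=G(N-2,M-1)$ and $f_{n-4,k-2}=G(N-2,M)$, so it suffices to prove the identity
\[
G(N,M)=G(N-1,M)+G(N-1,M-1)+G(N-2,M)+G(N-2,M-1).
\]
This follows by applying Pascal's rule to each factor of the summand of $G(N,M)$, writing $\binom{N-m}{m}=\binom{N-1-m}{m}+\binom{N-1-m}{m-1}$ and $\binom{N-m}{M}=\binom{N-1-m}{M}+\binom{N-1-m}{M-1}$, expanding the product into four terms, summing over $m$, and in the two terms containing $\binom{N-1-m}{m-1}$ shifting $m\mapsto m+1$ (the new $m=0$ term vanishes because it contains $\binom{N-1}{-1}=0$), which replaces $N-1$ by $N-2$; the four resulting sums are precisely the four $G$'s displayed above. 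By the induction hypothesis each equals the corresponding $c$-value, and by Theorem \ref{recursive c(n,k) theorem} their sum is $c_{n,k}$, so $f_{n,k}=c_{n,k}$ and the induction closes.

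The mathematical content here is just two applications of Pascal's identity; I expect the only care needed to be in the bookkeeping of summation ranges — checking that passing from the explicit bound $\sum_{m=0}^{k}$ to an unrestricted sum, and reindexing, neither drops nor creates nonzero terms. This is settled once and for all by the convention above together with the observations that $M=n-2k\ge 0$ and that $N-m\ge 0$ on the support of the summand, so that no binomial with negative upper index is ever produced; moreover the degenerate Pascal instance $\binom{0}{0}=\binom{-1}{0}+\binom{-1}{-1}$, which fails under this convention, is never invoked, since $n\ge 4$ forces $N=n-k\ge 2$. I would not try for a bijective proof: although the shape of the summand tempts one to read $m$ as the number of horizontal dimers, or as the number of $2\times 2$ blocks $\{i,i+2\},\{i+1,i+3\}$ of horizontal dimers, both readings already give the wrong term-by-term split for $c_{5,2}=7$, so induction via Theorem \ref{recursive c(n,k) theorem} is the natural route. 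As consistency checks one may verify $f_{4,1}=\binom{3}{0}\binom{3}{2}+\binom{2}{1}\binom{2}{2}=5$, $f_{5,2}=\binom{3}{0}\binom{3}{1}+\binom{2}{1}\binom{2}{1}+\binom{1}{2}\binom{1}{1}=7$, and $f_{6,3}=\binom{3}{0}\binom{3}{0}+\binom{2}{1}\binom{2}{0}+\binom{1}{2}\binom{1}{0}+\binom{0}{3}\binom{0}{0}=3$ against Table \ref{First values c_{n,k}}.
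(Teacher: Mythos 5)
Your proposal is correct, but it takes a genuinely different route from the paper. The paper explicitly remarks that Theorem~\ref{c_(n,k) formula theorem} ``can be proven by induction, but we prefer to present a combinatorial proof,'' and then proceeds combinatorially: every tiling is decomposed into unbreakable blocks via Lemma~\ref{lemma unbreakable tiling}, the tilings with prescribed block counts $k_1,k_2,k_3$ are identified with words in the letters $m,d,t,v$ and counted by a multinomial coefficient, and the resulting triple sum is reduced to \eqref{c_(n,k) formula} using the absorption identity and Vandermonde's convolution; in that proof the index $m$ acquires a meaning ($m=k_2+k_3$, the number of blocks containing horizontal dimers). You instead carry out exactly the induction the paper declined: in the reparametrization $G(N,M)$ with $N=n-k$, $M=n-2k$ the four terms of recurrence \eqref{recursive c_{n,k}} become $G(N-1,M-1)$, $G(N-1,M)$, $G(N-2,M-1)$, $G(N-2,M)$, and two applications of Pascal's rule plus a shift of index close the induction. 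This is shorter and purely formal, but it leans on Theorem~\ref{recursive c(n,k) theorem} and yields no combinatorial interpretation of the summand, which is what the paper's proof buys. One small imprecision in your bookkeeping: it is not quite true that the degenerate instance $\binom{0}{0}=\binom{-1}{0}+\binom{-1}{-1}$ is never met --- when $M=0$ the term $m=N$ has second factor $\binom{0}{0}$ --- but it is harmless, since that term's first factor $\binom{0}{N}$ vanishes for $N\ge 1$, so the product expansion (and hence the reindexed sum) is still an identity termwise; $N=n-k\ge 1$ is all that is actually needed, and your bound $N\ge 2$ certainly guarantees it. Your base cases and spot checks agree with Tables~\ref{Initial Values Table} and~\ref{First values c_{n,k}}, so the argument stands as a valid alternative proof.
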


Theorem \ref{c_(n,k) formula theorem} can be proven by induction, but
we prefer to present a combinatorial proof. To do that we need to some
new terms and one lemma.

We say that a tiling of a honeycomb strip is {\em breakable} at the position
$k$ if a given tiling can be divided into two tiled strips, first strip
of length $k$ and second of length $n-k$. Note that breaking the strip is only
allowed along the edge of the tile. If no such $k$ exist, we say that
tiling is {\em unbreakable}. 
  
For example, if first two hexagons form a dimer, the tiling is unbreakable
at position $1$, since it is not allowed to break a tiling through the dimer.
As an example, Figure \ref{breakability1} illustrates all breakable positions
of a given tiling.

\begin{figure}[h!]
\centering

\begin{tikzpicture}[scale=0.55]
\coordinate (Fh) at ({sqrt(3)/2},1);
\coordinate (F1h) at ({3*sqrt(3)/2},1);
\coordinate (F2h) at ({5*sqrt(3)/2},1); 
\coordinate (F3h) at ({7*sqrt(3)/2},1);
\coordinate (Kl) at ({sqrt(3)},-2.5);
\coordinate (K1l) at ({2*sqrt(3)},-2.5);
\coordinate (K2l) at ({3*sqrt(3)},-2.5);
\coordinate (K3l) at ({4*sqrt(3)},-2.5);

\draw [line width=0.25mm] (I)--(J)--(K)--(L)--(E1)--(F1)--(A1)--(B1)--(C1)--(H)--(I); 
\draw [line width=0.25mm] (A4)--(B4)--(C4)--(D4)--(E4)--(F4)--(A4);
\draw [line width=0.25mm](A2)--(B2)--(C2)--(D2)--(E2)-- (D3)--(E3)--(F3)--(A3)--(B3)--(A2);
\draw [line width=0.25mm](G2)--(H2)--(I2)--(J2)--(K2)--(J3)--(K3)--(L3)--(G3)--(H3)--(G2); 
\draw [line width=0.25mm] (G1)--(H1)--(I1)--(J1)--(K1)--(L1)--(G1) ; 
\draw[dashed,opacity=0.4] (G)--(L) (E2)--(F2) (K2)--(L2);

\draw [line width=0.5mm](F1h)--(G1)--(H1)--(Kl); 
\draw [line width=0.5mm](F1h)--(G1)--(L1)--(K1l); 
\draw [line width=0.5mm](F3h)--(G3)--(L3)--(K3l); 

\node[] (p) at ({sqrt(3)},0) {2};
\node[] (p) at ({2*sqrt(3)},0) {4};
\node[] (p) at ({3*sqrt(3)},0) {6};
\node[] (p) at ({4*sqrt(3)},0) {8};
\node[] (p) at ({sqrt(3)/2},-1.5) {1};
\node[] (p) at ({3*sqrt(3)/2},-1.5) {3};
\node[] (p) at ({5*sqrt(3)/2},-1.5) {5};
\node[] (p) at ({7*sqrt(3)/2},-1.5) {7};
\end{tikzpicture}  
\caption{Tiling of a honeycomb strip that is breakable at positions $2$,
$3$ and $7$.} \label{breakability1}
\end{figure}
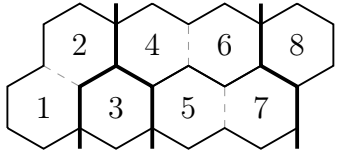

\begin{lm}\label{lemma unbreakable tiling} For $n>4$, every tiled strip
of length $n$ is breakable into four types of unbreakable tiled strips:
length-one strip tiled with a single monomer, length-two strip tiled
with a single dimer, length-three strip tiled with a horizontal dimer and 
a monomer, and length-four strip tiled with two horizontal dimers.   
\end{lm}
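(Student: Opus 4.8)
The plan is to argue by a minimal-counterexample / structural-induction argument on the leftmost tile of the strip, showing that every tiling that is not itself one of the four listed unbreakable types admits a break at some position. First I would clarify the claim: I want to show that reading the strip from left to right, the tiling decomposes uniquely into maximal unbreakable blocks, and each such block is one of the four listed types. It suffices to prove that the \emph{initial} unbreakable block of any tiling of length $n>4$ is one of the four types and has length at most four; then the conclusion follows by induction, applying the statement to the remaining strip (of length $n-\ell$ for $\ell\in\{1,2,3,4\}$, where if the remainder has length $\le 4$ one checks the finitely many small cases by hand).

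Next I would do the case analysis on how hexagon $1$ (bottom-left) is covered. If hexagon $1$ is a monomer, the tiling is breakable at position $1$ unless $n=1$; so the initial block is the single monomer, length one. If hexagon $1$ lies in a slanted dimer, that dimer covers hexagons $1$ and $2$ (the two hexagons of the first column), and the tiling is breakable at position $2$; the initial block is that dimer, length two. The remaining case is that hexagon $1$ lies in a horizontal dimer covering hexagons $1$ and $3$. Then I look at hexagon $2$: it is covered either by a monomer, or by a slanted dimer (with hexagon $4$... careful: hexagons $2$ and $4$ are the second column, a slanted dimer there), or by a horizontal dimer covering hexagons $2$ and $4$. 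In the first subcase the strip is breakable at position $3$ (the block is "horizontal dimer on $1,3$ plus monomer on $2$", length three); in the second subcase it is breakable at position $4$ (the block is "two horizontal dimers on $1,3$ and $2,4$", length four). The only subcase that does not immediately terminate is the horizontal dimer on $2,4$ together with the horizontal dimer on $1,3$: then hexagon $3$ is already covered, hexagon $4$ is already covered, and I must continue with hexagon $5$, which is again forced to be covered by a horizontal dimer on $5,7$ or a slanted dimer on $5,6$ or a monomer — but wait, hexagon $3$ being covered by the $1$--$3$ dimer means the block so far is $\{1,2,3,4\}$ and we \emph{can} break at position $4$. So in fact the genuinely non-terminating situation is: horizontal dimer on $1,3$, then something covering hexagon $2$; if that something is a horizontal dimer on $2,4$ we can break at $4$. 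The real recursion to watch is a chain of overlapping horizontal dimers $1$--$3$, $3$--$5$, $5$--$7,\dots$, but two horizontal dimers in the \emph{same} row sharing hexagon $3$ is impossible (a hexagon is in one tile), so after the $1$--$3$ dimer, hexagon $3$ is done and hexagon $5$ can only be reached through hexagon $4$'s tile or a fresh tile on $5$ — meaning the block closes by position $4$ at the latest. I would write this out carefully to confirm that no unbreakable block can extend past hexagon $4$.

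The main obstacle — and the part I would be most careful about — is making the informal phrase "breakable at position $k$" fully rigorous in the honeycomb geometry, specifically verifying that in each terminating subcase \emph{no tile straddles the proposed cut}, i.e.\ that the cut line used in Figure \ref{breakability1} really separates the tile set into two groups. The subtlety is the interaction between the two rows: a break at position $k$ must not be crossed by a horizontal dimer on $(k-1,k+1)$ nor by a slanted dimer on $(k,k+1)$ in \emph{either} row, and I need to check that in the "length-three block" and "length-four block" cases the relevant hexagons ($k=3$ resp.\ $k=4$) are indeed covered by tiles entirely inside the block. Once that bookkeeping is done, the induction is routine: peel off the initial block of length $\ell\le4$, apply the inductive hypothesis to the length-$(n-\ell)$ remainder (handling $n-\ell\le 4$ by direct enumeration, which is a short finite check), and concatenate the decompositions. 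I would also note uniqueness of the decomposition (each maximal unbreakable block is determined greedily from the left), although the statement as written only asserts existence.
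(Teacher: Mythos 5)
Your proposal is correct, but it is organized differently from the paper's proof. You peel off the leftmost unbreakable block by induction: a case analysis on the tile covering the first hexagon (monomer; slanted dimer on $\{1,2\}$; or horizontal dimer on $\{1,3\}$, in which case hexagon $2$ carries either a monomer or the horizontal dimer $\{2,4\}$, the slanted options $\{1,2\}$ and $\{2,3\}$ being blocked) shows the initial block is one of the four types and has length at most four, and the straddling check you flag does go through: any tile crossing the proposed cut at position $1$, $2$, $3$ or $4$ would have to reuse a hexagon already covered inside the block. The paper argues globally instead: every slanted dimer detaches on its own as a length-two block, and in what remains (monomers and horizontal dimers only) each horizontal dimer $\{i,i+2\}$ pairs either with a monomer on $i+1$ (length-three block) or with the overlapping horizontal dimer through $i+1$ (length-four block), leaving isolated monomers as length-one blocks. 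Both arguments rest on the same local observation---overlapping horizontal dimers must pair up, so no unbreakable configuration extends past four hexagons---but your version makes the cut-verification explicit and yields uniqueness of the decomposition for free, at the cost of the induction and the parity bookkeeping (the mirror-image blocks when the remainder starts at an even position), whereas the paper's version is shorter. One small slip to correct: hexagons $2$ and $4$ lie in the same row, so $\{2,4\}$ is a horizontal dimer, not a slanted one; your parenthetical ``slanted dimer with hexagon $4$'' names a tile that does not exist, though since you then list the correct horizontal case the case analysis is still exhaustive.
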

\begin{proof} Every left or right slanted dimer forms a
strip of length two. When removed, we are left with smaller strips, each
of them tiled with hexagons and horizontal dimers. Every horizontal dimer
occupies positions in the form $\left\lbrace i,i+2 \right\rbrace$.
If position $i+1$ is occupied by a monomer, hexagons in position $i,i+1$
and $i+2$ form a length-three tiled strip. If position $i+1$ is occupied
by another horizontal dimer, that dimer can occupy positions $i-1,i+1$
or $i+1,i+3$. Either way, those two horizontal dimers form a length-four
tiled strip. After they are removed, we are left with only monomers,
where each monomer forms a simple tiled strip of length one. Those are only
for types of unbreakable tilings. They are illustrated in
Figure  \ref{Unbreakable tiling}.  \end{proof} \begin{figure}[h!]
\centering \begin{tikzpicture}[scale=0.55]
 \draw [line width=0.25mm] (A) -- (B) -- (C) -- (D) -- (E) -- (F) --(A); 
\end{tikzpicture} \hspace{0.9 cm}   \begin{tikzpicture}[scale=0.55]
 \draw [line width=0.25mm] (A1)--(B1)--(C1)--(D1)--(I1)--(J1)--(K1)--(L1)--(G1)--(F1)--(A1); 
\draw [dashed,opacity=0.4] (D1)--(E1);  
\end{tikzpicture} \hspace{0.9 cm} \begin{tikzpicture}[scale=0.55]
\draw [line width=0.25mm] (A2)--(B2)--(C2)--(D2)--(E2)--(F2)--(A2); 
\draw [line width=0.25mm] (G1)--(H1)--(I1)--(J1)--(K1)--(J2)--(K2)--(L2)--(G2)--(H2)--(G1) ; 
\draw [dashed,opacity=0.4] (L1)--(K1);  
\end{tikzpicture} \hspace{0.9 cm}  \begin{tikzpicture}[scale=0.55]
\draw [line width=0.25mm] (G1)--(H1)--(I1)--(J1)--(K1)--(J2)--(K2)--(L2)--(G2)--(H2)--(G1); 
\draw [line width=0.25mm] (B1)--(C1)--(D1)--(E1)--(D2)--(E2)--(F2)--(A2)--(B2)--(A1)--(B1); 
\draw [dashed,opacity=0.4] (L1)--(K1); 
\draw [dashed,opacity=0.4] (E1)--(F1); 
\end{tikzpicture} 
\caption{All unbreakable types of tiled strip. The second and the fourth can be
left or right slanted, and the third can be upside down, depending on the
parity of the position.}\label{Unbreakable tiling}
\end{figure}
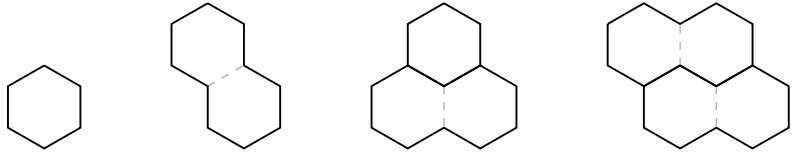   
\begin{proof}[Proof of the Theorem \ref{c_(n,k) formula theorem}] We denote
types of tiled strip from Figure \ref{Unbreakable tiling} by $M$, $D$ ,$T$ and
$V$, from left to right, respectively. By Lemma
\ref{lemma unbreakable tiling}, an arbitrary tiling of
a strip $H_n$ of length $n>4$ can be broken into those four types of unbreakable
tilings. Breaking of a given tiled strip into unbreakable strips produces
unique number of tiled strips of each type. So, let $k_1$ denotes the number
of strips of type $D$, $k_2$ the number of strips of type $T$, $k_3$ the
number of strips of type $V$, and since the strip has length $n$, what remains
are $n-2k_1-3k_2-4k_3$ strips of type $M$. Now we establish
1-1 correspondence between two sets: the first set, that contains all tilings of a strip $H_n$ which by braking produce $k_1$ strips of type $D$,
$k_2$ strips of type $T$, $k_3$ strips of type $V$ and $n-2k_1-3k_2-4k_3$
strips of type $M$, and the second set that contains all permutations with
repetition of a set with $n-k_1-2k_2-3k_3$ elements, where there are $k_1$
elements of type $d$, $k_2$ elements of type $t$, $k_3$ elements of type
$v$ and $n-2k_1-3k_2-4k_3$ elements of type $m$. From an arbitrary permutation we
obtain the corresponding tiling as follows: we replace elements $v$, $t$, $d$ and $m$ with a tiled strips of type $V$, $T$, $D$ and $M$, respectively.  For example, permutation
$dmdvt$ yields the tiling shown in Figure \ref{Tiling dmdvt}. The way to obtain a permutation from a given tiling is obvious.
\begin{figure}[h!]
\centering  \begin{tikzpicture}[scale=0.55]
\draw [line width=0.25mm] (A1)--(B1)--(C1)--(H)--(I)--(J)--(K)--(L)--(E1)--(F1)--(A1); 
\draw [line width=0.25mm] (G1)--(H1)--(I1)--(J1)--(K1)--(L1)--(G1); 
\draw [line width=0.25mm] (A2)--(B2)--(C2)--(H2)--(I2)--(J2)--(K2)--(L2)--(E2)--(F2)--(A2);
\draw [line width=0.25mm] (A3)--(B3)--(C3)--(D3)--(E3)--(D4)--(E4)--(F4)--(A4)--(B4)--(A3); 
\draw [line width=0.25mm] (G3)--(H3)--(I3)--(J3)--(K3)--(J4)--(K4)--(L4)--(G4)--(H4)--(G3); 
\draw [line width=0.25mm] (A5)--(B5)--(C5)--(D5)--(E5)--(D6)--(E6)--(F6)--(A6)--(B6)--(A5); 
\draw [line width=0.25mm] (J5)--(K5)--(L5)--(G5)--(H5)--(I5)--(J5); 
\draw [dashed,opacity=0.4] (C1)--(D1)(D2)--(E2)(B4)--(C4)(H4)--(I4)(E5)--(F5);
\node[] (p1) at (J) [yshift=-3ex] {$d$};
\node[] (p1) at (J1) [yshift=-3.3ex] {$m$};
\node[] (p1) at (J2) [yshift=-3ex] {$d$};
\node[] (p1) at (J3) [xshift=2.8ex,yshift=-3.3ex] {$v$};
\node[] (p1) at (J5) [yshift=-3ex] {$t$};

 \end{tikzpicture} 
\caption{Tiling that corresponds to permutation $dmdvt$.}\label{Tiling dmdvt}
\end{figure}
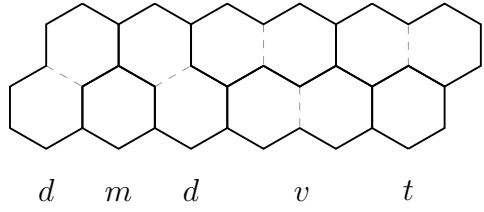

The number of all permutation in the second set is
$\frac{(n-k_1-2k_2-3k_3)!}{k_1!k_2!k_3!(n-2k_1-3k_2-4k_3)!}$. One can
easily verify that this expression can be written as 
$\binom{n-k_1-2k_2-3k_3}{k_1}\binom{n-2k_1-2k_2-3k_3}{k_3}\binom{n-2k_1-2k_2-4k_3}{k_2}$. 
Since we are interested in the number $c_{n,k}$ which denotes the number
of ways to tile a length-$n$ strip that contains exactly $k$ dimers, note
that $k_1+k_2+2k_3$ must be equal to $k$. Hence, we have 
\begin{align*}
c_{n,k}&=\sum\limits_{k_1+k_2+2k_3=k}\binom{n-k_1-2k_2-3k_3}{k_1}\binom{n-2k_1-2k_2-3k_3}{k_3}\binom{n-2k_1-2k_2-4k_3}{k_2}.\end{align*}
By introducing new index of summation $m=k_2+k_3$ and by substitutions
$k_2=m-k_3$ and $k_1=k-k_2-2k_3=k-m-k_3$ we obtain:
\begin{align*}
c_{n,k}&=\sum\limits_{m=0}^k\sum\limits_{k_3=0}^{m}\binom{n-k-m}{k-m-k_3}\binom{n-2k+k_3}{k_3}\binom{n-2k}{m-k_3}\\
&=\sum\limits_{m=0}^k\sum\limits_{k_3=0}^{m}\binom{n-k-m}{n-2k+k_3}\binom{n-2k+k_3}{n-2k}\binom{n-2k}{m-k_3}.
\end{align*}
Finally, by using identity
$\binom{n}{k}\binom{k}{m}=\binom{n}{m}\binom{n-m}{k-m}$ on the first two
binomial coefficients and Vandermonde's convolution
$\sum\limits_{k}\binom{n}{k}\binom{r}{m-k}=\binom{n+r}{m}$ (see \cite{GKP}),
we arrive to:
\begin{align*}
c_{n,k}&=\sum\limits_{m=0}^k\binom{n-k-m}{n-2k}\left(\sum\limits_{k_3=0}^{m}\binom{k-m}{k_3}\binom{n-2k}{m-k_3}\right)\\
&=\sum\limits_{m=0}^k\binom{n-k-m}{n-2k}\binom{n-k-m}{m},
\end{align*} which concludes our proof.
\end{proof}
Since the row sums in the Table \ref{First values c_{n,k}} are tetranacci
numbers, the Theorem \ref{c_(n,k) formula theorem} gives us identity
$$Q_{n+3}=\sum\limits_{k=0}^{\floor*{\frac{n}{2}}}\sum\limits_{m=0}^k\binom{n-k-m}{m}\binom{n-k-m}{n-2k}.$$

\section{Tilings of honeycomb strip with colored dimers and monomers}

Katz and Stenson \cite{KS} used colored squares and dominos to tile
($2\times n$)-rectangular board and obtained a recursive relation for
the number of all ways to tile a board. They also proved some combinatorial
identities involving the number of such tilings.
In this section we do a honeycomb strip analogue. We continue to count
tilings of a hexagon strip with dimers and monomers, but we allow $a$ different
colors for monomers and $b$ different colors for dimers. Let $h^{a,b}_{n}$
denotes the number of all different tilings of a strip with $n$ hexagons.
It is convenient to define $h^{a,b}_{0}=1$. We start with initial values
illustrated  in Figure \ref{Initial values}.  One can easily see that
$h^{a,b}_1=a$ since we have $a$ colors to choose from for a monomer.
Similarly, $h^{a,b}_2=a^2+b$, since we can tile a strip with two monomers
in $a^2$ ways or with one dimer in $b$ ways. For $n=3$, note that if we
use only monomers, we can choose colors in $a^3$ ways, and if we use one
dimer and one monomer, we can put dimer in $3$ different positions and
for each of that positions we can choose colors for tiles in $ab$ ways.
Hence, $h^{a,b}_3=a^3+3ab$.

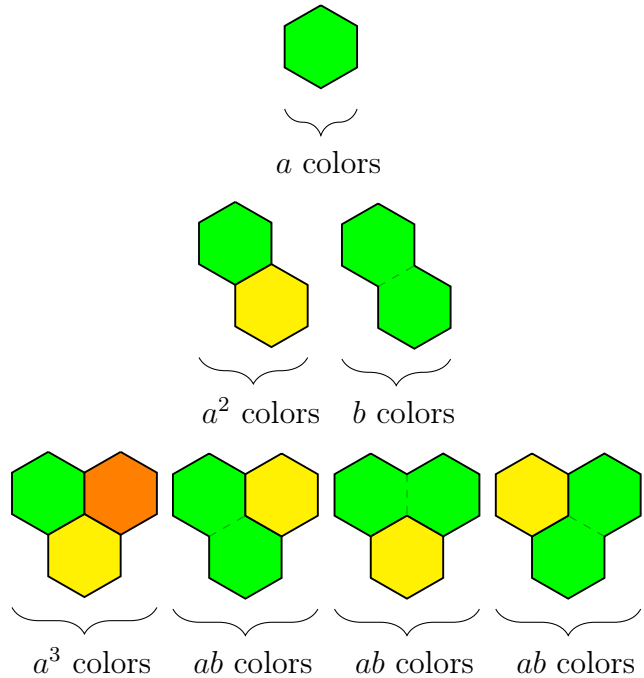
\begin{figure}[h!]
\centering \begin{tikzpicture}[scale=0.55]

\draw [decorate,decoration={brace,amplitude=10pt}]
({sqrt(3)/2},-1.5)--({-sqrt(3)/2},-1.5) node [black,midway,yshift=-0.7cm,xshift=0.1cm] 
{$a$ colors};
\draw [line width=0.25mm, fill=green] (A) -- (B) -- (C) -- (D) -- (E) -- (F) --(A); 
\end{tikzpicture} 
  
\vspace{0.2 cm}

\begin{tikzpicture}[scale=0.55]
\draw [line width=0.25mm, fill=green] (A)--(B)--(C)--(D)--(E)--(F)--(A); 
\draw [line width=0.25mm, fill=yellow] (G)--(H)--(I)--(J)--(K)--(L)--(G); 
\draw [decorate,decoration={brace,amplitude=10pt}]
({sqrt(3)},-2.75)--({-sqrt(3)/2},-2.75) node [black,midway,yshift=-0.7cm,xshift=0.1cm] 
{$a^2$ colors};

\end{tikzpicture} \begin{tikzpicture}[scale=0.55]
\draw [decorate,decoration={brace,amplitude=10pt}]
({sqrt(3)},-2.75)--({-sqrt(3)/2},-2.75) node [black,midway,yshift=-0.7cm,xshift=0.1cm] 
{$b$ colors};

\draw [line width=0.25mm, fill=green] (A)--(B)--(C)--(D)--(I)--(J)--(K)--(L)--(G)--(F)--(A); 

\draw[dashed,opacity=0.4] (D) -- (E); 

\end{tikzpicture} 

\vspace{0.2 cm}

\begin{tikzpicture}[scale=0.55]
\draw [decorate,decoration={brace,amplitude=10pt}]
({3*sqrt(3)/2},-2.75)--({-sqrt(3)/2},-2.75) node [black,midway,yshift=-0.7cm,xshift=0.1cm] 
{$a^3$ colors};

\draw [line width=0.25mm, fill=green] (A)--(B)--(C)--(D)--(E)--(F)--(A); 
\draw [line width=0.25mm, fill=orange] (A1)--(B1)--(C1)--(D1)--(E1)--(F1)--(A1); 
\draw [line width=0.25mm, fill=yellow] (G)--(H)--(I)--(J)--(K)--(L)--(G); 
\end{tikzpicture} \begin{tikzpicture}[scale=0.55]

\draw [decorate,decoration={brace,amplitude=10pt}]
({3*sqrt(3)/2},-2.75)--({-sqrt(3)/2},-2.75) node [black,midway,yshift=-0.7cm,xshift=0.1cm] 
{$ab$ colors};
\draw [line width=0.25mm, fill=green] (A)--(B)--(C)--(D)--(I)--(J)--(K)--(L)--(G)--(F)--(A); 
\draw [line width=0.25mm, fill=yellow] (A1)--(B1)--(C1)--(D1)--(E1)--(F1)--(A1); 

\draw[dashed,opacity=0.4] (D) -- (E); 
\end{tikzpicture} \begin{tikzpicture}[scale=0.55]

\draw [decorate,decoration={brace,amplitude=10pt}]
({3*sqrt(3)/2},-2.75)--({-sqrt(3)/2},-2.75) node [black,midway,yshift=-0.7cm,xshift=0.1cm] 
{$ab$ colors};

\draw [line width=0.25mm, fill=green] (A)--(B)--(C)--(D)--(E)--(D1)--(E1)--(F1)--(A1)--(B1)--(A); 
\draw [line width=0.25mm, fill=yellow] (G)--(H)--(I)--(J)--(K)--(L)--(G); 

\draw[dashed,opacity=0.4] (F) -- (E); 
\end{tikzpicture} \begin{tikzpicture}[scale=0.55]

\draw [decorate,decoration={brace,amplitude=10pt}]
({3*sqrt(3)/2},-2.75)--({-sqrt(3)/2},-2.75) node [black,midway,yshift=-0.7cm,xshift=0.1cm] 
{$ab$ colors};
\draw [line width=0.25mm, fill=green] (C1)--(H)--(I)--(J)--(K)--(D1)--(E1)--(F1)--(A1)--(B1)--(C1); 
\draw [line width=0.25mm, fill=yellow] (A)--(B)--(C)--(D)--(E)--(F)--(A); 
\draw[dashed,opacity=0.4] (D1) -- (C1); 
\end{tikzpicture}
\caption{All possible tilings for $n=1,2,3$.}\label{Initial values}
\end{figure} 
 
In the next theorem we give a recursive relation for $h^{a,b}_n$.
\begin{tm} For $n\geq 4$, the number of all possible tilings of the
honeycomb strip containing $n$ hexagons with $a$ different kinds of
monomer and $b$ different kinds of dimer satisfies the recursive relation
$$h^{a,b}_n=a\cdot h^{a,b}_{n-1}+b\cdot h^{a,b}_{n-2}+ab\cdot h^{a,b}_{n-3}+b^2\cdot h^{a,b}_{n-4}$$
with the initial conditions $h^{a,b}_0 = 1$, $h^{a,b}_1 = a$, $h^{a,b}_2
= a^2+b$, and $h^{a,b}_3 = a^3+ 3ab$.
\end{tm}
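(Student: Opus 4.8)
The plan is to adapt, essentially verbatim, the proof of Theorem \ref{recursive c(n,k) theorem}: condition on how the last (the $n$-th) hexagon of $H_n$ is covered, but now keep track of color choices instead of the number of dimers. The same four mutually exclusive and exhaustive possibilities depicted in Figure \ref{All cases} apply for $n\ge 4$: (i) the $n$-th hexagon carries a monomer; (ii) it is part of a slanted dimer, which then also covers hexagon $n-1$; (iii) it is part of a horizontal dimer covering $\{n-2,n\}$ with hexagon $n-1$ covered by a monomer; (iv) it is part of a horizontal dimer covering $\{n-2,n\}$ with hexagon $n-1$ covered by a second horizontal dimer on $\{n-3,n-1\}$. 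Completeness and disjointness of this split were already established in the proof of Theorem \ref{recursive c(n,k) theorem}, so I would simply invoke that.

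The key steps are then four routine counts. In case (i), deleting the terminal monomer leaves an arbitrary tiling of $H_{n-1}$ and the monomer carries one of $a$ colors, giving $a\,h^{a,b}_{n-1}$. In case (ii), deleting the slanted dimer on $\{n-1,n\}$ leaves an arbitrary tiling of $H_{n-2}$ and the dimer is colored in $b$ ways, giving $b\,h^{a,b}_{n-2}$. In case (iii), deleting the horizontal dimer on $\{n-2,n\}$ and the monomer on $n-1$ leaves an arbitrary tiling of $H_{n-3}$, with color factors $b$ and $a$, giving $ab\,h^{a,b}_{n-3}$. In case (iv), deleting the two horizontal dimers leaves an arbitrary tiling of $H_{n-4}$, each dimer colored independently in $b$ ways, giving $b^2\,h^{a,b}_{n-4}$. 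Summing the four disjoint contributions gives the claimed recurrence, and the initial values $h^{a,b}_0=1$, $h^{a,b}_1=a$, $h^{a,b}_2=a^2+b$, $h^{a,b}_3=a^3+3ab$ were already derived in the discussion preceding the theorem (cf.\ Figure \ref{Initial values}).

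There is no real obstacle here; the only point requiring a word of care is that the colors assigned to the removed tiles are chosen independently of the colors of the tiles in the remaining shorter strip, so that the number of extended tilings genuinely factors as (colorings of the removed tiles) $\times$ (tilings of the shorter strip). Since in our model every tile is colored independently, this factorization is immediate, and the proof goes through exactly as for Theorem \ref{recursive c(n,k) theorem}.
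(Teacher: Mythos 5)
Your proposal is correct and follows essentially the same route as the paper's own proof: condition on how the $n$-th hexagon is covered, reuse the four-case decomposition from Theorem \ref{recursive c(n,k) theorem}, and attach the color factors $a$, $b$, $ab$, $b^2$ to the respective cases. Nothing is missing; the remark about the independence of color choices for the removed tiles is the same (implicit) justification the paper relies on.
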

\begin{proof} The proof is similar to the proof of Theorem
\ref{recursive c(n,k) theorem}, but here we must also pay attention to
the colors. We consider an arbitrary tiling and note that $n$-th hexagon
can either be tiled by monomer or dimer. In the case when $n$-th
hexagon is tiled by monomer, the rest of the strip can be tiled in
$h^{a,b}_{n-1}$ ways, but the monomer can be colored in
$a$ different ways, which gives us the
total of $a\cdot h^{a,b}_n$ possible ways. If the last hexagon is a part
of a dimer, then we distinguish two possible situations: either the dimer
is slanted,
or the dimer is horizontal. The number of tilings ending in a slanted dimer
is $h^{a,b}_{n-2}$, and the last dimer can be colored in $b$ ways. So there
are $b\cdot h^{a,b}_{n-2}$ such tilings. As in the proof of Theorem
\ref{recursive c(n,k) theorem}, if the dimer is horizontal, it means that
it covers the $(n-2)$-nd and the $n$-th hexagon. In that case, the $(n-1)$-st
hexagon can be tiled by monomer, we can choose colors in $ab$ ways, and the
rest of the strip can be tiled in $h^{a,b}_{n-3}$ ways. This gives us the
$ab\cdot h^{a,b}_{n-3}$ possible tiling in this case. The last case is if the
$(n-1)$-st hexagon forms a dimer with $(n-3)$-rd hexagon. There are
$b^2\cdot h^{a,b}_{n-4}$ such tilings. All cases are illustrated in
Figure \ref{All colored cases}. 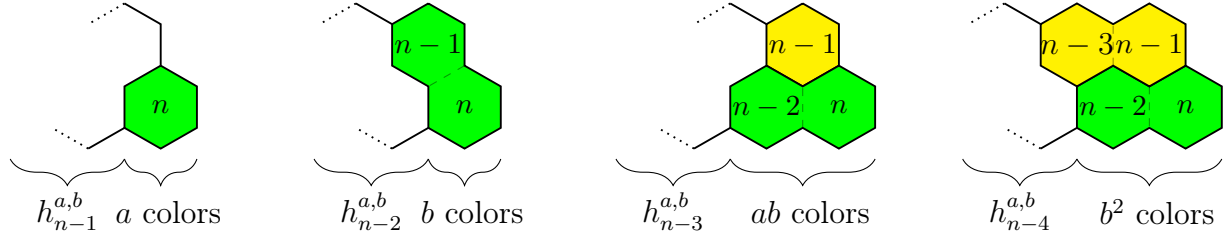
\begin{figure}[h!]
\centering \begin{tikzpicture}[scale=0.55]

\draw [line width=0.25mm,fill=green] (G1)--(H1)--(I1)--(J1)--(K1)--(L1)--(G1) ; 

\draw [line width=0.25mm]  (K)--(J) (E1)--(F1)--(A1); 
\draw [line width=0.25mm, dotted] (A1)--(B1) (J)--(I); 
\draw [decorate,decoration={brace,amplitude=10pt}]
 ({sqrt(3)},-2.75)--(-1,-2.75) node [black,midway,yshift=-0.7cm] 
{$h^{a,b}_{n-1}$}; 
\draw [decorate,decoration={brace,amplitude=10pt}]
({2*sqrt(3)},-2.75)--({sqrt(3)},-2.75) node [black,midway,yshift=-0.7cm,xshift=0.1cm] 
{$a$ colors};

\node[font=\small] (p) at ({3*sqrt(3)/2},-1.5) {$n$};
\end{tikzpicture} \hspace{0.8 cm}   \begin{tikzpicture}[scale=0.55]

\draw [line width=0.25mm,fill=green] (A1)--(B1)--(C1)--(D1)--(I1)--(J1)--(K1)--(L1)--(G1)--(F1)--(A1); 

\draw [line width=0.25mm]  (K)--(J) (F)--(A); 
\draw [line width=0.25mm, dotted] (A)--(B) (J)--(I); 
\draw [dashed,opacity=0.4] (E1)--(D1);

\draw [decorate,decoration={brace,amplitude=10pt}]
 ({sqrt(3)},-2.75)--(-1,-2.75) node [black,midway,yshift=-0.7cm] 
{$h^{a,b}_{n-2}$}; 
\draw [decorate,decoration={brace,amplitude=10pt}]
({2*sqrt(3)},-2.75)--({sqrt(3)},-2.75) node [black,midway,yshift=-0.7cm,xshift=0.1cm] 
{$b$ colors};
\node[font=\small] (p) at ({sqrt(3)},0) {$n-1$};
\node[font=\small] (p) at ({3*sqrt(3)/2},-1.5) {$n$};
\end{tikzpicture} \hspace{0.8 cm} \begin{tikzpicture}[scale=0.55]

\draw [line width=0.25mm, fill=yellow] (A2)--(B2)--(C2)--(D2)--(E2)--(F2)--(A2); 
\draw [line width=0.25mm,fill=green] (G1)--(H1)--(I1)--(J1)--(K1)--(J2)--(K2)--(L2)--(G2)--(H2)--(G1) ; 

\draw [line width=0.25mm]  (K)--(J) (F1)--(A1); 
\draw [line width=0.25mm, dotted] (A1)--(B1) (J)--(I); 

\draw [dashed,opacity=0.4] (K1)--(L1); 
    \draw [decorate,decoration={brace,amplitude=10pt}]
 ({sqrt(3)},-2.75)--(-1,-2.75) node [black,midway,yshift=-0.7cm] 
{$h^{a,b}_{n-3}$}; 
\draw [decorate,decoration={brace,amplitude=10pt}]
({3*sqrt(3)},-2.75)--({sqrt(3)},-2.75) node [black,midway,yshift=-0.7cm,xshift=0.1cm] 
{$ab$ colors};
\node[font=\small] (p) at ({2*sqrt(3)},0) {$n-1$};
\node[font=\small] (p) at ({3*sqrt(3)/2},-1.5) {$n-2$};
\node[font=\small] (p) at ({5*sqrt(3)/2},-1.5) {$n$};
\end{tikzpicture} \hspace{0.8 cm}  \begin{tikzpicture}[scale=0.55]

\draw [line width=0.25mm,fill=green] (G1)--(H1)--(I1)--(J1)--(K1)--(J2)--(K2)--(L2)--(G2)--(H2)--(G1); 
\draw [line width=0.25mm,fill=yellow] (B1)--(C1)--(D1)--(E1)--(D2)--(E2)--(F2)--(A2)--(B2)--(A1)--(B1); 

\draw [line width=0.25mm]  (K)--(J) (F)--(A); 
\draw [line width=0.25mm, dotted] (A)--(B) (J)--(I); 

\draw [dashed,opacity=0.4] (L1)--(K1); 
\draw [dashed,opacity=0.4] (E1)--(F1); 

 \draw [decorate,decoration={brace,amplitude=10pt}]
 ({sqrt(3)},-2.75)--(-1,-2.75) node [black,midway,yshift=-0.7cm] 
{$h^{a,b}_{n-4}$}; 
\draw [decorate,decoration={brace,amplitude=10pt}]
({3*sqrt(3)},-2.75)--({sqrt(3)},-2.75) node [black,midway,yshift=-0.7cm,xshift=0.1cm] 
{$b^2$ colors};
\node[] (p) at ({sqrt(3)},0) {$n-3$};
\node[font=\small] (p) at ({2*sqrt(3)},0) {$n-1$};
\node[font=\small] (p) at ({5*sqrt(3)/2},-1.5) {$n$};
\node[font=\small] (p) at ({3*sqrt(3)/2},-1.5) {$n-2$};
\end{tikzpicture}  
\caption{All possible endings of a colored tiling of a strip with $n$ hexagons.}
\label{All colored cases}
\end{figure}  
This gives us relation
$h^{a,b}_n=a\cdot h^{a,b}_{n-1}+b\cdot h^{a,b}_{n-2}+ab\cdot h^{a,b}_{n-3}+b^2\cdot h^{a,b}_{n-4}$,
which proves our theorem. 
\end{proof}

We can now list some first values of $h^{a,b}_n$. 
\begin{table}[h!]
\centering
$\begin{array}{r|l}
n & h^{a,b}_{n}\\
\hline
0 & 1\\
1 & a\\
2 & a^2+b\\
3 & a^3+3ab\\
4 & a^4+5a^2b+2b^2\\ 
5 & a^5+7a^3b+7a b^2\\
6 & a^6+9a^4b+16a^2b^2+3b^3
\end{array}$  \caption{Some first values of $h^{a,b}_{n}$.}\label{First values h_{a,b,n}}
\end{table}
We can notice that the values $c_{n,k}$ from the last section appear in
every row as coefficients of a bivariate polynomial. Connection between
these values is given in the next theorem:

\begin{tm} The number $h^{a,b}_{n}$ of all possible tilings of the
honeycomb strip of length $n$ with monomers of $a$ different colors and
dimers of $b$ different colors is given by
$$h^{a,b}_n=\sum\limits_{k=0}^{\floor*{\frac{n}{2}}}c_{n,k}a^{n-2k}b^k=\sum\limits_{k=0}^{\floor*{\frac{n}{2}}}\sum\limits_{m=0}^k\binom{n-k-m}{m}\binom{n-k-m}{k-m}a^{n-2k}b^k.$$
\end{tm}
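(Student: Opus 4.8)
The plan is to prove the identity by a direct combinatorial (counting-in-two-ways) argument, refining the total count $h^{a,b}_n$ according to the number of dimers used. First I would observe that every tiling of $H_n$ with colored tiles is obtained in exactly one way from an \emph{uncolored} tiling of $H_n$ by assigning a color to each tile: if the underlying uncolored tiling uses $k$ dimers (and hence $n-2k$ monomers, as established in Section~2), then there are $a^{n-2k}$ ways to color its monomers and $b^k$ ways to color its dimers, for a total of $a^{n-2k}b^k$ colored tilings sharing that skeleton. Summing over all uncolored tilings and grouping them by their dimer count $k$, which ranges from $0$ to $\lfloor n/2\rfloor$, yields
\begin{equation*}
h^{a,b}_n=\sum_{k=0}^{\lfloor n/2\rfloor} c_{n,k}\,a^{n-2k}b^k,
\end{equation*}
since $c_{n,k}$ is by definition the number of uncolored tilings of $H_n$ with exactly $k$ dimers.

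The second equality is then immediate: substitute the closed form for $c_{n,k}$ from Theorem~\ref{c_(n,k) formula theorem}, namely $c_{n,k}=\sum_{m=0}^{k}\binom{n-k-m}{m}\binom{n-k-m}{n-2k}$, and rewrite $\binom{n-k-m}{n-2k}$ as $\binom{n-k-m}{k-m}$ using the symmetry $\binom{p}{q}=\binom{p}{p-q}$ (here $p=n-k-m$ and $p-q=(n-k-m)-(n-2k)=k-m$). This gives the double-sum expression in the statement.

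Alternatively, one could give a fully self-contained proof that does not invoke Theorem~\ref{c_(n,k) formula theorem} but instead re-runs the unbreakable-strip decomposition of Lemma~\ref{lemma unbreakable tiling} while tracking colors: a tiling decomposing into $k_1$ pieces of type $D$, $k_2$ of type $T$, $k_3$ of type $V$ and $n-2k_1-3k_2-4k_3$ of type $M$ carries $a^{\,n-2k_1-2k_2-2k_3}\,b^{\,k_1+k_2+2k_3}$ color assignments (each $T$ and each $V$ contributes $ab$ and $b^2$ respectively, since they each contain one monomer, resp.\ zero monomers), and summing the permutation count $\binom{n-k_1-2k_2-3k_3}{k_1}\binom{n-2k_1-2k_2-3k_3}{k_3}\binom{n-2k_1-2k_2-4k_3}{k_2}$ weighted by this monomial reproduces the same polynomial. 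I would, however, prefer the short route through Section~2.

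I do not expect a serious obstacle here; the only point requiring a little care is the bookkeeping of color exponents, i.e.\ confirming that a skeleton with $k$ dimers really has exactly $n-2k$ monomers (true because dimers cover two hexagons and monomers one, and the hexagons are partitioned), so that the weight $a^{n-2k}b^k$ is correctly attached to each class before summation. Once that is checked, both equalities follow by the grouping argument and a single binomial symmetry, with no computation beyond what Theorem~\ref{c_(n,k) formula theorem} already provides.
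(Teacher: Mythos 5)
Your main argument is exactly the paper's proof: group colored tilings by their uncolored skeleton according to the number $k$ of dimers, attach the weight $a^{n-2k}b^k$, sum over $k$, and obtain the double sum by substituting Theorem \ref{c_(n,k) formula theorem} together with the symmetry $\binom{n-k-m}{n-2k}=\binom{n-k-m}{k-m}$; this is correct. (Only a minor slip in the alternative sketch you set aside: the monomer exponent there should be $n-2k_1-2k_2-4k_3$, not $n-2k_1-2k_2-2k_3$, but your preferred route does not use it.)
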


\begin{proof}
We could prove the theorem by induction, but again we present a simple 
combinatorial
proof. The number $h^{a,b}_n$ denotes the number of all possible tilings
of the strip of a length $n$. For a fixed $0\leq k\leq \floor{\frac{n}{2}}$,
there are $c_{n,k}$ possible ways to tile a strip with exactly $k$ dimers,
and since this tiling have $k$ dimers and $n-2k$ monomers, the colors can
be selected in $a^{n-2k}b^k$ ways which gives a total of $c_{n,k}a^{n-2k}b^k$
possible tilings. Since every tiling of the strip can contain
$0$, $1$, ... , $\floor*{\frac{n}{2}}-1$ or $\floor*{\frac{n}{2}}$ dimers,
the overall number of tilings is the sum of these cases, that is
$h^{a,b}_n=\sum\limits_{k=0}^{\floor*{\frac{n}{2}}}c_{n,k}a^{n-2k}b^k$.  
\end{proof}
           
\section{Some (generalized) combinatorial identities involving tetranacci numbers}

In this section we generalize several of the identities obtained by
Dresden and Ziqian \cite{Ziqian} to the case of colored tilings
of a honeycomb strip. All of the following identities reduce to the
mentioned identities of Dresden and Ziqian by setting $a=b=1$.   
        
\begin{tm} For every $m,n\geq 0$  \begin{dmath*}
h^{a,b}_{m+n}=h^{a,b}_{m}h^{a,b}_{n}+ h^{a,b}_{m-1}\left(bh^{a,b}_{n-1}+abh^{a,b}_{n-2}+b^2h^{a,b}_{n-3}\right)+ h^{a,b}_{m-2}\left(ab h^{a,b}_{n-1}+b^2h^{a,b}_{n-2}\right)+b^2 h^{a,b}_{n-1}h^{a,b}_{m-3}.
\end{dmath*}  \end{tm}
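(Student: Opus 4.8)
The plan is to give a combinatorial proof by sorting every tiling of the honeycomb strip $H_{m+n}$ according to its behaviour across the cut that separates the first $m$ hexagons from the last $n$ hexagons. Throughout I would adopt the convention $h^{a,b}_{j}=0$ for $j<0$, so that all the shifted terms on the right-hand side make sense for every $m,n\ge 0$; whenever such a term vanishes, the configuration it is meant to count is geometrically impossible, so nothing is lost.

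First I would note that a monomer covers only one hexagon and so cannot cross the cut: if a tiling is not breakable at position $m$, then some dimer covers hexagons on both sides. Checking the three shapes of dimer, the only ones that can cross are the slanted dimer on $\{m,m+1\}$, the horizontal dimer on $\{m-1,m+1\}$, and the horizontal dimer on $\{m,m+2\}$. Because each hexagon lies in at most one tile, the slanted crossing dimer is incompatible with either horizontal crossing dimer, whereas the two horizontal crossing dimers occupy the disjoint pairs $\{m-1,m+1\}$ and $\{m,m+2\}$ and hence may occur simultaneously. This gives exactly five disjoint and exhaustive cases for the ``crossing pattern'': (0) no crossing dimer; (1) only the slanted dimer; (2) only the horizontal dimer on $\{m-1,m+1\}$; (3) only the horizontal dimer on $\{m,m+2\}$; (4) both horizontal dimers.

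Next I would evaluate each case by factoring the tiling as (a tiling of an initial strip) $\times$ (the fixed crossing tile(s), together with their colour choices and the colour of the one hexagon near the cut whose covering they force) $\times$ (a tiling of a final strip). Case (0) is a tiling of $H_m$ next to a tiling of $H_n$, contributing $h^{a,b}_{m}h^{a,b}_{n}$. Case (1) leaves $H_{m-1}$, then a $b$-coloured dimer, then $H_{n-1}$, contributing $b\,h^{a,b}_{m-1}h^{a,b}_{n-1}$. In case (2) hexagon $m$ is not crossing, so it is covered either by a monomer (initial strip $H_{m-2}$, extra factor $a$) or by the horizontal dimer on $\{m-2,m\}$ (initial strip $H_{m-3}$, extra factor $b$); in both subcases the final strip is $H_{n-1}$ and the crossing dimer contributes $b$, so case (2) gives $ab\,h^{a,b}_{m-2}h^{a,b}_{n-1}+b^{2}\,h^{a,b}_{m-3}h^{a,b}_{n-1}$. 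Symmetrically, case (3) gives $ab\,h^{a,b}_{m-1}h^{a,b}_{n-2}+b^{2}\,h^{a,b}_{m-1}h^{a,b}_{n-3}$, according to whether hexagon $m+1$ is a monomer or the near end of the horizontal dimer on $\{m+1,m+3\}$. In case (4) the four hexagons $m-1,m,m+1,m+2$ are used up, leaving $H_{m-2}$ and $H_{n-2}$ together with two $b$-coloured dimers, contributing $b^{2}\,h^{a,b}_{m-2}h^{a,b}_{n-2}$. Summing the five contributions and collecting the terms according to the factors $h^{a,b}_{m},h^{a,b}_{m-1},h^{a,b}_{m-2},h^{a,b}_{m-3}$ reproduces the claimed identity.

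The step that needs the most care --- and the one most easily botched --- is the exhaustiveness of the crossing-pattern list. The key observations are that two horizontal dimers can cross the same cut simultaneously (this is the source of the otherwise easily overlooked term $b^{2}h^{a,b}_{m-2}h^{a,b}_{n-2}$), and that in the single-horizontal-dimer cases the lone uncovered hexagon adjacent to the cut can only be a monomer or the near end of a horizontal dimer reaching two positions back or forward, which is precisely what produces the $ab$-versus-$b^{2}$ split in cases (2) and (3). The rest --- checking that each ``initial'' and ``final'' piece is an unconstrained tiling of a shorter strip and that the colour weights multiply --- is routine, and setting $a=b=1$ recovers the corresponding tetranacci identity of Dresden and Ziqian.
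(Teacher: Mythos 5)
Your proof is correct and follows essentially the same route as the paper: you classify tilings of $H_{m+n}$ by breakability at position $m$ and enumerate the dimers crossing the cut, and your five crossing patterns with their subcases reproduce exactly the six unbreakable layouts (plus the breakable case $h^{a,b}_{m}h^{a,b}_{n}$) that the paper displays. The only cosmetic difference is organizational—the paper lists the six non-breakable configurations directly, while you derive them by splitting the single-horizontal-dimer cases according to how the leftover hexagon next to the cut is covered.
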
  
 \begin{proof} 
We consider a tiling of a honeycomb strip containing $m+n$ hexagons. We
have $h^{a,b}_{m+n}$ such tilings. On the other hand, there are
$h^{a,b}_{m}\cdot h^{a,b}_{n}$ tilings that are breakable at position $m$,
as shown in the Figure \ref{breakable at m}.
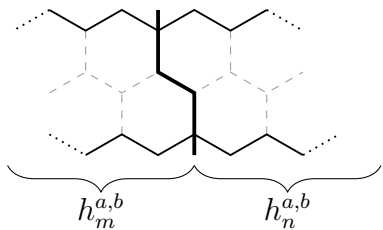
\begin{figure}[h!]
\centering
\begin{tikzpicture}[scale=0.55]
\coordinate (F1h) at ({3*sqrt(3)/2},1); 

\draw[dashed,opacity=0.4] (D)--(E)--(D1)--(E1)--(D2)--(E2)--(D3)--(E3) (F)--(E) (F2)--(E2) (K)--(L) (K2)--(L2); 
 
\draw [line width=0.25mm](A)--(F)--(A1)--(F1)--(A2)--(F2)--(A3) (J)--(K)--(J1)--(K1)--(J2)--(K2)--(J3) ; 
\draw[line width=0.25mm,dotted] (A)--(B) (A3)--(F3) (I)--(J) (J3)--(K3); 

\draw [line width=0.5mm](F1h)--(F1)--(E1)--(L1)--(K1)--(K1l); 
\draw [decorate,decoration={brace,amplitude=10pt}]
 ({2*sqrt(3)},-2.75)--(-1,-2.75) node [black,midway,yshift=-0.6cm] 
{$h^{a,b}_{m}$};
\draw [decorate,decoration={brace,amplitude=10pt}]
 ({4*sqrt(3)+1},-2.75)--({2*sqrt(3)},-2.75) node [black,midway,yshift=-0.6cm] 
{$h^{a,b}_{n}$};
\end{tikzpicture}  
\caption{Breakable tiling at position $m$.} \label{breakable at m}
\end{figure} 
All other tilings are unbreakable at position $m$. If that is the case,
unbreakability can occur because of the right-inclined, left-inclined or
horizontal dimer crossing the line of the break. Figure
\ref{not breakable at m} shows all possible situations that can occur
if tiling is not breakable at position $m$. Note that any tiling of a
honeycomb strip is breakable if $n>4$. 
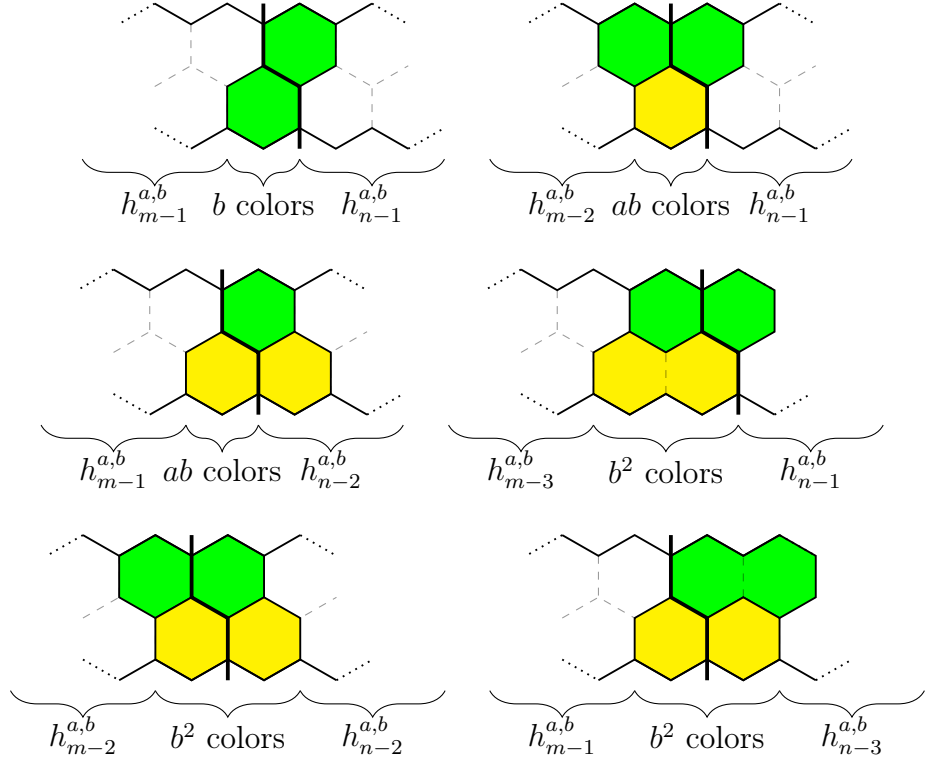
\begin{figure}[h!]
\centering
\begin{tikzpicture}[scale=0.55]
\draw[dashed,opacity=0.4] (D)--(E)--(D1)--(E1)--(D2)--(E2)--(D3)--(E3) (F)--(E) (F2)--(E2) (K)--(L) (K2)--(L2); 
\draw[line width=0.25mm, fill=green] (A2)--(B2)--(C2)--(H1)--(I1)--(J1)--(K1)--(L1)--(E2)--(F2)--(A2);

\draw [line width=0.25mm](A)--(F)--(A1)--(F1)--(A2)--(F2)--(A3) (J)--(K)--(J1)--(K1)--(J2)--(K2)--(J3) ; 
\draw[line width=0.25mm,dotted] (A)--(B) (A3)--(F3) (I)--(J) (J3)--(K3); 

\draw [line width=0.5mm](F1h)--(F1)--(E1)--(L1)--(K1)--(K1l); 
\draw [decorate,decoration={brace,amplitude=10pt}]
 ({sqrt(3)},-2.75)--({-sqrt(3)},-2.75) node [black,midway,yshift=-0.6cm] 
{$h^{a,b}_{m-1}$};
\draw [decorate,decoration={brace,amplitude=10pt}]
 ({4*sqrt(3)},-2.75)--({2*sqrt(3)},-2.75) node [black,midway,yshift=-0.6cm] 
{$h^{a,b}_{n-1}$};\draw [decorate,decoration={brace,amplitude=10pt}]
 ({2*sqrt(3)},-2.75)--({sqrt(3)},-2.75) node [black,midway,yshift=-0.6cm] 
{$b$ colors};
\end{tikzpicture} \hspace{0.3 cm}  \begin{tikzpicture}[scale=0.55]
\draw[dashed,opacity=0.4] (D)--(E)--(D1)--(E1)--(D2)--(E2)--(D3)--(E3) (F)--(E) (F2)--(E2) (K)--(L) (K2)--(L2); 
\draw[line width=0.25mm, fill=green] (B1)--(A1)--(F1)--(A2)--(F2)--(E2)--(D2)--(C2)--(D1)--(C1)--(B1); 
\draw[line width=0.25mm, fill=yellow] (G1)--(H1)--(I1)--(J1)--(K1)--(L1)--(G1);

\draw [line width=0.25mm](A)--(F)--(A1)--(F1)--(A2)--(F2)--(A3) (J)--(K)--(J1)--(K1)--(J2)--(K2)--(J3) ; 
\draw[line width=0.25mm,dotted] (A)--(B) (A3)--(F3) (I)--(J) (J3)--(K3); 

\draw [line width=0.5mm](F1h)--(F1)--(E1)--(L1)--(K1)--(K1l); 
\draw [decorate,decoration={brace,amplitude=10pt}]
 ({sqrt(3)},-2.75)--({-sqrt(3)},-2.75) node [black,midway,yshift=-0.6cm] 
{$h^{a,b}_{m-2}$};
\draw [decorate,decoration={brace,amplitude=10pt}]
 ({4*sqrt(3)},-2.75)--({2*sqrt(3)},-2.75) node [black,midway,yshift=-0.6cm] 
{$h^{a,b}_{n-1}$};\draw [decorate,decoration={brace,amplitude=10pt}]
 ({2*sqrt(3)},-2.75)--({sqrt(3)},-2.75) node [black,midway,yshift=-0.6cm] 
{$ab$ colors};
\end{tikzpicture}  

\vspace{0.4cm} 

\begin{tikzpicture}[scale=0.55]

\draw[dashed,opacity=0.4] (D)--(E)--(D1)--(E1)--(D2)--(E2)--(D3)--(E3) (F)--(E) (F2)--(E2) (K)--(L) (K2)--(L2); 
\draw[line width=0.25mm, fill=green] (B2)--(A2)--(F2)--(E2)--(D2)--(C2)--(B2); 
\draw[line width=0.25mm, fill=yellow] (G1)--(H1)--(I1)--(J1)--(K1)--(J2)--(K2)--(L2)--(G2)--(H2)--(G1);

\draw [line width=0.25mm](A)--(F)--(A1)--(F1)--(A2)--(F2)--(A3) (J)--(K)--(J1)--(K1)--(J2)--(K2)--(J3) ; 
\draw[line width=0.25mm,dotted] (A)--(B) (A3)--(F3) (I)--(J) (J3)--(K3); 

\draw [line width=0.5mm](F1h)--(F1)--(E1)--(L1)--(K1)--(K1l); 
\draw [decorate,decoration={brace,amplitude=10pt}]
 ({sqrt(3)},-2.75)--({-sqrt(3)},-2.75) node [black,midway,yshift=-0.6cm] 
{$h^{a,b}_{m-1}$};
\draw [decorate,decoration={brace,amplitude=10pt}]
 ({4*sqrt(3)},-2.75)--({2*sqrt(3)},-2.75) node [black,midway,yshift=-0.6cm] 
{$h^{a,b}_{n-2}$};\draw [decorate,decoration={brace,amplitude=10pt}]
 ({2*sqrt(3)},-2.75)--({sqrt(3)},-2.75) node [black,midway,yshift=-0.6cm] 
{$ab$ colors};
\end{tikzpicture}  \hspace{0.3 cm} \begin{tikzpicture}[scale=0.55]

\draw[dashed,opacity=0.4] (D)--(E)--(D1)--(E1)--(D2)--(E2)--(D3)--(E3) (F)--(E) (F2)--(E2) (K)--(L) (K2)--(L2) (K)--(L); 
\draw[line width=0.25mm, fill=green] (B3)--(A3)--(F3)--(E3)--(D3)--(C3)--(D2)--(C2)--(B2)--(A2)--(B3); 
\draw[line width=0.25mm, fill=yellow] (G1)--(H1)--(I1)--(J1)--(K1)--(J2)--(K2)--(L2)--(G2)--(H2)--(G1); 
\draw[dashed,opacity=0.4] (K1)--(L1);

\draw [line width=0.25mm](A)--(F)--(A1)--(F1)--(A2)--(F2)--(A3) (J)--(K)--(J1)--(K1)--(J2)--(K2)--(J3) ; 
\draw[line width=0.25mm,dotted]   (A)--(B) (A3)--(F3) (I)--(J) (J3)--(K3); 

\draw [line width=0.5mm](F2h)--(F2)--(E2)--(L2)--(K2)--(K2l); 
\draw [decorate,decoration={brace,amplitude=10pt}]
 ({sqrt(3)},-2.75)--({-sqrt(3)},-2.75) node [black,midway,yshift=-0.6cm] 
{$h^{a,b}_{m-3}$};
\draw [decorate,decoration={brace,amplitude=10pt}]
 ({5*sqrt(3)},-2.75)--({3*sqrt(3)},-2.75) node [black,midway,yshift=-0.6cm] 
{$h^{a,b}_{n-1}$};\draw [decorate,decoration={brace,amplitude=10pt}]
 ({3*sqrt(3)},-2.75)--({sqrt(3)},-2.75) node [black,midway,yshift=-0.6cm] 
{$b^2$ colors};
\end{tikzpicture} \vspace{0.4 cm}

 \begin{tikzpicture}[scale=0.55]

\draw[dashed,opacity=0.4] (D)--(E)--(D1)--(E1)--(D2)--(E2)--(D3)--(E3) (F)--(E) (F2)--(E2) (K)--(L) (K2)--(L2); 
\draw[line width=0.25mm, fill=green] (B2)--(A2)--(F2)--(E2)--(D2)--(C2)--(D1)--(C1)--(B1)--(A1)--(B2); 
\draw[line width=0.25mm, fill=yellow] (G1)--(H1)--(I1)--(J1)--(K1)--(J2)--(K2)--(L2)--(G2)--(H2)--(G1);

\draw [line width=0.25mm](A)--(F)--(A1)--(F1)--(A2)--(F2)--(A3) (J)--(K)--(J1)--(K1)--(J2)--(K2)--(J3) ; 
\draw[line width=0.25mm,dotted] (A)--(B) (A3)--(F3) (I)--(J) (J3)--(K3); 

\draw [line width=0.5mm](F1h)--(F1)--(E1)--(L1)--(K1)--(K1l); 
\draw [decorate,decoration={brace,amplitude=10pt}]
 ({sqrt(3)},-2.75)--({-sqrt(3)},-2.75) node [black,midway,yshift=-0.6cm] 
{$h^{a,b}_{m-2}$};
\draw [decorate,decoration={brace,amplitude=10pt}]
 ({5*sqrt(3)},-2.75)--({3*sqrt(3)},-2.75) node [black,midway,yshift=-0.6cm] 
{$h^{a,b}_{n-2}$};\draw [decorate,decoration={brace,amplitude=10pt}]
 ({3*sqrt(3)},-2.75)--({sqrt(3)},-2.75) node [black,midway,yshift=-0.6cm] 
{$b^2$ colors};
\end{tikzpicture} \hspace{0.3 cm} \begin{tikzpicture}[scale=0.55]

\draw[dashed,opacity=0.4] (D)--(E)--(D1)--(E1)--(D2)--(E2)--(D3)--(E3) (F)--(E) (F2)--(E2) (K)--(L) (K2)--(L2); 
\draw[line width=0.25mm, fill=green] (B2)--(A2)--(B3)--(A3)--(F3)--(E3)--(D3)--(C3)--(D2)--(C2)--(B2); 
\draw[line width=0.25mm, fill=yellow] (G1)--(H1)--(I1)--(J1)--(K1)--(J2)--(K2)--(L2)--(G2)--(H2)--(G1);

\draw [line width=0.25mm](A)--(F)--(A1)--(F1)--(A2)--(F2)--(A3) (J)--(K)--(J1)--(K1)--(J2)--(K2)--(J3) ; 
\draw[line width=0.25mm,dotted] (A)--(B) (A3)--(F3) (I)--(J) (J3)--(K3); 
\draw[dashed,opacity=0.4] (E2)--(F2); 

\draw [line width=0.5mm](F1h)--(F1)--(E1)--(L1)--(K1)--(K1l); 
\draw [decorate,decoration={brace,amplitude=10pt}]
 ({sqrt(3)},-2.75)--({-sqrt(3)},-2.75) node [black,midway,yshift=-0.6cm] 
{$h^{a,b}_{m-1}$};
\draw [decorate,decoration={brace,amplitude=10pt}]
 ({5*sqrt(3)},-2.75)--({3*sqrt(3)},-2.75) node [black,midway,yshift=-0.6cm] 
{$h^{a,b}_{n-3}$};\draw [decorate,decoration={brace,amplitude=10pt}]
 ({3*sqrt(3)},-2.75)--({sqrt(3)},-2.75) node [black,midway,yshift=-0.6cm] 
{$b^2$ colors};
\end{tikzpicture}
\caption{Layouts that can occur if tiling is not breakable at position $m$.} \label{not breakable at m}
\end{figure} 
Summing all these cases gives us the proof of the theorem. \end{proof}

Our second identity counts tilings of the strip containing at least one dimer.

\begin{tm} For every integer $n\geq 1$,
\begin{equation}
h^{a,b}_{n}-a^n=b h^{a,b}_{n-2}+2b\sum\limits_{k=3}^n a^{k-2} h^{a,b}_{n-k}+b^2\sum\limits_{k=3}^n a^{k-3} h^{a,b}_{n-k-1}.
\end{equation}
\end{tm}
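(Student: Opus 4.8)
The plan is to count one combinatorial quantity in two ways, namely the number of tilings of $H_n$ that use \emph{at least one} dimer. The left-hand side already expresses this: $h^{a,b}_n$ counts all tilings, and $a^n$ counts the tilings that use only monomers (one monomer on each of the $n$ hexagons, each colored in $a$ ways), so $h^{a,b}_n-a^n$ is exactly the number of tilings of $H_n$ containing at least one dimer. To obtain the right-hand side I would classify such a tiling by its \emph{first} dimer, that is, by the smallest index $p$ of a hexagon covered by a dimer. By minimality of $p$, the hexagons $1,\dots,p-1$ are all covered by monomers, contributing a factor $a^{p-1}$, and in particular the dimer covering hexagon $p$ cannot reach to the left of position $p$.

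Next I would split into cases according to that dimer. Since hexagon $p-1$ is a monomer, the dimer on hexagon $p$ is either the slanted dimer on $\{p,p+1\}$ or a horizontal dimer on $\{p,p+2\}$. In the slanted case it is colored in $b$ ways and the hexagons $p+2,\dots,n$ form a fresh strip tiled in $h^{a,b}_{n-p-1}$ ways. In the horizontal case hexagon $p+1$ cannot lie in a slanted dimer (both its neighbours are occupied) nor in the horizontal dimer on $\{p-1,p+1\}$ (hexagon $p-1$ is a monomer), so it is either a monomer — a length-three block colored in $ab$ ways, followed by a strip of length $n-p-2$ — or the horizontal dimer on $\{p+1,p+3\}$ — a length-four block colored in $b^2$ ways, followed by a strip of length $n-p-3$. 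These cases are pairwise disjoint and exhaustive, and the upper limits on $p$ are forced by the requirement that a dimer not protrude past hexagon $n$, giving
\[
h^{a,b}_n - a^n \;=\; \sum_{p=1}^{n-1} a^{p-1} b\, h^{a,b}_{n-p-1} \;+\; \sum_{p=1}^{n-2} a^{p} b\, h^{a,b}_{n-p-2} \;+\; \sum_{p=1}^{n-3} a^{p-1} b^{2}\, h^{a,b}_{n-p-3}.
\]

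Finally I would reindex each sum into the claimed shape: $k=p+1$ in the first sum gives $\sum_{k=2}^{n} a^{k-2} b\, h^{a,b}_{n-k} = b\,h^{a,b}_{n-2} + b\sum_{k=3}^{n} a^{k-2} h^{a,b}_{n-k}$; $k=p+2$ in the second gives $b\sum_{k=3}^{n} a^{k-2} h^{a,b}_{n-k}$, so the first two sums combine into $b\,h^{a,b}_{n-2} + 2b\sum_{k=3}^{n} a^{k-2} h^{a,b}_{n-k}$; and $k=p+2$ in the third gives $b^2\sum_{k=3}^{n-1} a^{k-3} h^{a,b}_{n-k-1}$, which coincides with $b^2\sum_{k=3}^{n} a^{k-3} h^{a,b}_{n-k-1}$ once we adopt the convention $h^{a,b}_{-1}=0$ (consistent both with extending the defining recurrence backwards and with the empty set of tilings of a strip of length $-1$). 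Summing yields the identity. There is no deep obstacle here; the only points requiring care are using the minimality of $p$ at each step of the case analysis to forbid a leftward dimer, keeping track of the reduced summation ranges, and the boundary term $k=n$ in the last sum, which is exactly where the $h^{a,b}_{-1}=0$ convention is used — I would flag this explicitly and, if desired, cross-check the identity by hand for $n=1,2,3$.
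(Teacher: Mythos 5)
Your proposal is correct and is essentially the paper's own argument: both double-count the tilings with at least one dimer by locating the first dimer and splitting into the same three configurations (slanted dimer; horizontal dimer with a monomer above; horizontal dimer followed by a second horizontal dimer), differing only in that you index by the first dimer's leftmost hexagon while the paper indexes by its rightmost. Your explicit handling of the boundary term via $h^{a,b}_{-1}=0$ matches a convention the paper itself uses elsewhere, so there is no gap.
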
        
\begin{proof} We prove the result by double counting of all ways of to tile
a strip in which there is at least one dimer. On one hand, there are
$h^{a,b}_n-a^n$ such tilings, since
the only tiling without dimers uses only monomers, and we can choose
colors in $a^n$ ways. The other way to count such tilings keeps trace of the
position where the first dimer occurs. Since the dimer covers two positions
in the strip, we use the larger number to determine it position. For
example, dimer occupying hexagons $1$ (or $2$) and $3$ has a position $3$.
First we start with slanted dimers. If the position of
the first dimer is $k$ for $k\geq 2$, the first part of the strip consists
of $k-2$ monomers and the rest of the strip can be tiled in $h^{a,b}_{n-k}$
ways, which gives us total of $b\sum\limits_{k=2}^na^{k-2}h^{a,b}_{n-k}$
ways. We must now consider the horizontal dimer case. Note that the
horizontal dimer cannot have positions $1$ and $2$. If the position of dimer
is $k$ for $k\geq 3$, then the dimer occupies hexagons $k-2$ and $k$. We have
two subcases, depending on whether the $(k-1)$-st hexagon is tiled by a
monomer or by a dimer. In the second
case, it must be paired with $(k+1)$-st hexagon, since position $k$ is first
to occur. In the first subcase, dimer and monomer can be colored in $ab$
ways, the first part of the strip consisting of $k-3$ monomers can be
colored in $a^{k-3}$ ways, and the rest of the strip can be tiled in
$h^{a,b}_{n-k}$ ways, which gives us the total of
$b\sum\limits_{k=3}^na^{k-2}h^{a,b}_{n-k}$ ways. The latter subcase
involves two dimers, the first occupying hexagons $k-2$ and $k$, and the
second covering $k-1$ and $k+1$. These dimers can be colored in $b^2$ ways, the
first part of the strip consisting of $k-3$ monomers can be colored in
$a^{k-3}$ ways, and the rest can be tiled in $h^{a,b}_{n-k-1}$ ways. Since
all the cases are disjoint, the overall number is the sum of the respective
counting numbers, which proves our theorem.
 \end{proof}
 
We conclude this section with a pair of identities counting tilings of
the strip containing at least one monomer. 

\begin{tm}For every integer $n\geq 0$ we have \begin{equation}
h^{a,b}_{2n}-b^nF_{n+1}=a\sum\limits_{k=0}^n   b^k h^{a,b}_{2n-2k-1}F_{k+2}\end{equation}
and
\begin{equation}
h^{a,b}_{2n-1}=a\sum\limits_{k=0}^n   b^k h^{a,b}_{n-2k-1}F_{k+2}.\end{equation} \end{tm}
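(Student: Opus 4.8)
The plan is to prove both identities at once by double counting the colored tilings of a honeycomb strip that use at least one monomer. Write $N$ for the length of the strip, so $N=2n$ in the first identity and $N=2n-1$ in the second. First I would dispose of the left-hand sides. When $N=2n$, a dimer-only tiling of $H_{2n}$ uses $n$ dimers, hence is one of the $c_{2n,n}=F_{n+1}$ tilings forming the rightmost diagonal of the triangle of $c_{n,k}$, each colorable in $b^n$ ways; so $h^{a,b}_{2n}-b^nF_{n+1}$ is exactly the number of colored tilings of $H_{2n}$ having a monomer. When $N=2n-1$, nothing is subtracted: an odd-length strip admits no dimer-only tiling, so every one of the $h^{a,b}_{2n-1}$ colored tilings of $H_{2n-1}$ already has a monomer. (I would adopt the convention $h^{a,b}_m=0$ for $m<0$ and $F_0=0$, which makes both identities hold trivially at $n=0$.) It then remains to prove that the number of colored tilings of $H_N$ with at least one monomer equals $a\sum_{k\ge 0}F_{k+2}\,b^{k}\,h^{a,b}_{N-2k-1}$; for $N=2n$ this is the first identity, and for $N=2n-1$ it is the second (the summand there being $h^{a,b}_{(2n-1)-2k-1}=h^{a,b}_{2n-2k-2}$).

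Next I would classify a colored tiling of $H_N$ with a monomer by the position $j$ of its \emph{first} monomer, splitting by the parity of $j$; in each case the governing fact is that a sub-strip tiled by dimers alone must have even length. If $j=2k+1$ is odd, the block $\{1,\dots,2k\}$ contains no monomer and detaches as a self-contained sub-strip (otherwise the dimer on positions $\{2k,2k+2\}$ would leave the odd-length block $\{1,\dots,2k-1\}$ to be tiled by dimers), so it is one of the $c_{2k,k}=F_{k+1}$ dimer-only tilings of $H_{2k}$, while $\{2k+2,\dots,N\}$ is tiled arbitrarily. If $j=2k$ is even (so $k\ge 1$), the dimer covering position $2k-1$ cannot lie inside $\{1,\dots,2k-1\}$ (that block would then be a dimer-only sub-strip of odd length) and cannot be the slanted dimer on $\{2k-1,2k\}$ (that hexagon holds the monomer), hence it is the horizontal dimer on $\{2k-1,2k+1\}$; now $\{1,\dots,2k-2\}$ detaches as a dimer-only sub-strip, giving $c_{2k-2,k-1}=F_{k}$ choices, while $\{2k+2,\dots,N\}$ is again arbitrary. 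In both cases the part of the tiling on $\{1,\dots,2k+1\}$ is a cap built from $k$ dimers and one monomer (in the even case one of those dimers being the straddling horizontal one), the $k$ dimers colored in $b^k$ ways, the monomer in $a$ ways, and the remaining block $\{2k+2,\dots,N\}$ contributes the factor $h^{a,b}_{N-2k-1}$.

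Finally I would sum over $k$: the odd-$j$ and even-$j$ contributions with a common value of $k$ together give $(F_{k+1}+F_k)\,b^k\,a\,h^{a,b}_{N-2k-1}=a\,F_{k+2}\,b^k\,h^{a,b}_{N-2k-1}$ by the Fibonacci recurrence (for $k=0$ only the odd case occurs, and $F_0=0$ accounts for the absent even-$j$ term), and the terms with $N-2k-1<0$ vanish, so the sum truncates at $k=n$ and yields $a\sum_{k=0}^{n}F_{k+2}\,b^k\,h^{a,b}_{N-2k-1}$, proving both identities. The part requiring care --- and, I expect, the only real obstacle --- is the parity bookkeeping: certifying that in each case the initial block detaches as a dimer-only sub-strip so that the two cases are exhaustive and disjoint, and matching the two cap types of a given size with the consecutive Fibonacci numbers $F_{k+1}$ and $F_k$. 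The rest is routine.
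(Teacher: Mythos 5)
Your proposal is correct and follows essentially the same route as the paper: classify colored tilings with at least one monomer by the position of the first monomer, split by parity, count the dimer-only prefix by Fibonacci numbers ($b^kF_{k+1}$ resp. $b^{k-1}F_k$ with the forced horizontal dimer), and combine via $F_{k+1}+F_k=F_{k+2}$. Your parity/detachment bookkeeping is in fact slightly more careful than the paper's, and you correctly read the summand of the second identity as $h^{a,b}_{2n-2k-2}$ (the index $n-2k-1$ in the statement is a typo).
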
        
\begin{proof} The number of tilings of the $2n$-strip containing only
dimers is $b^nF_{n+1}$. Hence, the number of tilings containing at least
one monomer is $h^{a,b}_{n}-b^nF_{n+1}$.  On the other hand, we can count
such tilings based on the position of the first monomer. First we consider
the odd positions in the strip. If the first monomer occurs at position $2k+1$,
for some $0\leq k\leq n-1$, the first part of the strip is tiled only by dimers,
and that can be done in $b^kF_{k+1}$ ways, the monomer can be colored in
$a$ ways, and the rest of the strip can be tiled in $h^{a,b}_{2n-2k-1}$ ways. 
Figure \ref{First hexagon, k odd}  illustrates this case. 
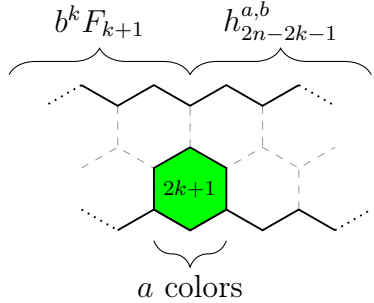
\begin{figure}[h!]
\centering

\begin{tikzpicture}[scale=0.55]

\draw [line width=0.25mm] (A3)--(B3)--(A2)--(B2)--(A1)--(B1)--(A) (J3)--(K2)--(J2)--(K1)--(J1)--(K)--(J); 

\draw [line width=0.25mm,fill=green] (G1)--(H1)--(I1)--(J1)--(K1)--(L1)--(G1); 
\draw[dashed,opacity=0.4] (D)--(E)--(D1)--(E1)--(D2)--(E2)--(D3)--(E3) (F)--(E) (F2)--(E2) (K)--(L) (K2)--(L2) (K1)--(L1) (E1)--(F1); 
\draw[line width=0.25mm,dotted] (A)--(B) (A3)--(F3) (I)--(J) (J3)--(K3);

\draw [decorate,decoration={brace,amplitude=10pt}]
 ({-sqrt(3)},1.25)--({3*sqrt(3)/2},1.25) node [black,midway,yshift=0.7cm] 
{$b^k F_{k+1}$};
\draw [decorate,decoration={brace,amplitude=10pt}]
 ({3*sqrt(3)/2},1.25)--({8*sqrt(3)/2},1.25) node [black,midway,yshift=0.7cm] 
{$h^{a,b}_{2n-2k-1}$};\draw [decorate,decoration={brace,amplitude=10pt}]
 ({2*sqrt(3)},-2.75)--({sqrt(3)},-2.75) node [black,midway,yshift=-0.6cm] 
{$a$ colors};
\node[] (p) at ({3*sqrt(3)/2},-1.5) {$_{2k+1}$};
\end{tikzpicture}  
\caption{The hexagon occurs at position $2k+1$.} \label{First hexagon, k odd}
\end{figure}  

Since the monomer can occur at any position $2k+1$ for $0\leq k \leq n-1$,
the total number of ways that monomer occurs at odd position is
$a\sum\limits_{k=0}^{n-1}b^kh^{a,b}_{2n-2k-1}F_{k+1}$. 

Now we consider the even positions. The case is similar, but there are
some different details. If the first monomer occurs at position $2k$ for
$1\leq k \leq n$, then all $2k-1$ hexagons must be tiled with dimers. For this
to be possible, the $(2k+1)$-st hexagon must be tiled by the same dimer as
$(2k-1)$-st. This dimer and monomer can be colored in $ab$ ways. The first
part of the strip containing $2k-2$ hexagons can be tiled only by dimers and in
$b^{k-1}F_{k}$ ways, and the rest of the strip in $h^{a,b}_{2n-2k-1}$ ways.
This case is illustrated in Figure \ref{First hexagon, k even}. 

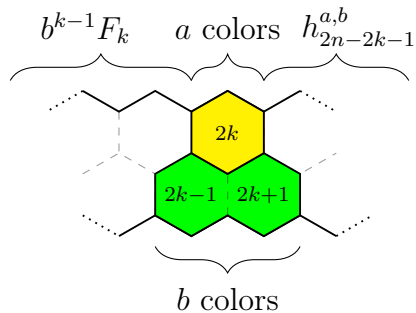
\begin{figure}[h!]
\centering

\begin{tikzpicture}[scale=0.55]

\draw [line width=0.25mm] (A3)--(B3)--(A2)--(B2)--(A1)--(B1)--(A) (J3)--(K2)--(J2)--(K1)--(J1)--(K)--(J); 

\draw [line width=0.25mm,fill=green]  (G1)--(H1)--(I1)--(J1)--(K1)--(J2)--(K2)--(L2)--(G2)--(H2)--(G1) ; 
\draw [line width=0.25mm,fill=yellow]  (A2)--(B2)--(C2)--(D2)--(E2)--(F2)--(A2); 

\draw[dashed,opacity=0.4] (D)--(E)--(D1)--(E1)--(D2)--(E2)--(D3)--(E3) (F)--(E) (F2)--(E2) (K)--(L) (K2)--(L2) (K1)--(L1) ; 
\draw[line width=0.25mm,dotted] (A)--(B) (A3)--(F3) (I)--(J) (J3)--(K3);

\draw [decorate,decoration={brace,amplitude=10pt}]
 ({-sqrt(3)},1.25)--({3*sqrt(3)/2},1.25) node [black,midway,yshift=0.7cm,xshift=-0.2cm] 
{$b^{k-1} F_{k}$};
\draw [decorate,decoration={brace,amplitude=10pt}]
 ({5*sqrt(3)/2},1.25)--({9*sqrt(3)/2},1.25) node [black,midway,yshift=0.7cm,xshift=0.3cm] 
{$h^{a,b}_{2n-2k-1}$};
\draw [decorate,decoration={brace,amplitude=10pt}] ({3*sqrt(3)/2},1.25)--({5*sqrt(3)/2},1.25) node [black,midway,yshift=0.7cm] 
{$a$ colors};\draw [decorate,decoration={brace,amplitude=10pt}]
 ({3*sqrt(3)},-2.75)--({1*sqrt(3)},-2.75) node [black,midway,yshift=-0.7cm] 
{$b$ colors};

\node[] (p) at ({2*sqrt(3)},0) {$_{2k}$};
\node[] (p) at ({5*sqrt(3)/2},-1.5) {$_{2k+1}$};
\node[] (p) at ({3*sqrt(3)/2},-1.5) {$_{2k-1}$};
\end{tikzpicture}  
\caption{The hexagon occurs at position $2k$.} \label{First hexagon, k even}
\end{figure}
The number of tilings where monomer occurs at even position is $ab\sum\limits_{k=0}^{n-1}b^{k-1}h^{a,b}_{2n-2k-1}F_{k}$, and the total number of tilings is the sum of these two cases. Since $h^{a,b}_{-1}=0$ and $F_0=0$, the first sum can be extended to $k=n$ and the second to $k=0$. 
\begin{align*}
h^{a,b}_{2n}-b^nF_{n+1}&=a\sum\limits_{k=0}^{n-1}b^kh^{a,b}_{2n-2k-1}F_{k+1}+ab\sum\limits_{k=1}^{n}b^{k-1}h^{a,b}_{2n-2k-1}F_{k}\\
&=a\sum\limits_{k=0}^{n}b^kh^{a,b}_{2n-2k-1}F_{k+1}+a\sum\limits_{k=0}^{n}b^{k}h^{a,b}_{2n-2k-1}F_{k}\\
&=a\sum\limits_{k=0}^{n}b^kh^{a,b}_{2n-2k-1}F_{k+2}
\end{align*}

The proof of second identity is similar. When the length of the strip is
odd, i.e. $2n-1$, the left hand side is $h^{a,b}_n$, since it cannot be
tiled only by dimers, and the proof for the right hand side is the same,
hence the theorem follows.

\end{proof}

\section{Tiling of a honeycomb strip and tribonacci numbers}

The tribonacci numbers (sequence A000073 in OEIS \cite{OEIS}) are the sequence
of integers starting with $T_0=0$, $T_1=0$ and $T_2=1$ and defined by
recursive relation
\begin{equation}\label{tribonacci recurence}
T_n=T_{n-1}+T_{n-2}+T_{n-3},\text{ for }n\geq 3.
\end{equation}
For the reader's convenience we list a first few values of the sequence
in Table \ref{tribonacci first values}.

\begin{table}[h!]\centering
$\begin{array}{c|ccccccccccc}
n & 0 & 1 & 2& 3& 4& 5& 6& 7& 8& 9& 10\\
\hline
T_n & 0& 0&  1&  1&  2&  4&  7&  13&  27&  44&  81 
\end{array}$ \caption{The first few values of tribonacci numbers.}
\label{tribonacci first values}
\end{table}

In this section we are still interested in counting all tilings of a
honeycomb strip of a given length, but now by using different types of
tiles. We still allow monomers and slanted dimers, but we prohibit horizontal
dimers. In addition, we allow trimers of consecutively numbered hexagons. By
prohibiting horizontal dimers we effectively suppress longer-range connections
represented by horizontal edges in the inner dual. Also, by allowing trimers
of the form $\left\lbrace i-1,i,i+1\right\rbrace$ we abandon the context
of matchings and instead work with packings in the inner dual.
The allowed tiles are illustrated in Figure \ref{Tiles tribonacci}.

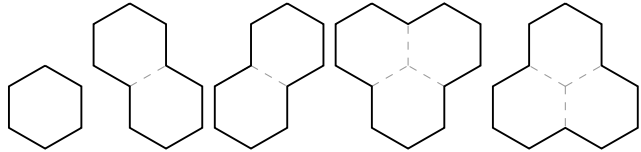
\begin{figure}[h!]
\centering \begin{tikzpicture}[scale=0.55]
\draw [line width=0.25mm](A)--(B)--(C)--(D)--(E)--(F)--(A); 
\end{tikzpicture}  \begin{tikzpicture}[scale=0.55]

\draw [line width=0.25mm](A)--(B)--(C)--(D)--(I)--(J)--(K)--(L)--(E)--(F)--(A);
\draw[dashed,opacity=0.4] (D)--(E); 
\end{tikzpicture}  \begin{tikzpicture}[scale=0.55]

\draw[dashed] (D) -- (E); 
\draw [line width=0.25mm] (A1)--(B1)--(C1)--(H)--(I)--(J)--(K)--(L)--(E1)--(F1)--(A1); 
\draw[dashed,opacity=0.4] (G)--(L); 
\end{tikzpicture}  \begin{tikzpicture}[scale=0.55]

\draw [line width=0.25mm] (A) -- (B) -- (C) -- (D) -- (I) -- (J) --(K)--(L)--(E1)--(F1)--(A1)--(B1)--(A); 
\draw[dashed,opacity=0.4] (D)--(E) (L)--(E) (E)--(F) ; 
\end{tikzpicture}  \begin{tikzpicture}[scale=0.55]

\draw [line width=0.25mm] (A1)--(B1)--(C1)--(H)--(I)--(J)--(K)--(J1)--(K1)--(L1)--(G1)--(F1)--(A1); 
\draw[dashed,opacity=0.4] (L)--(G) (L)--(K) (L)--(E1); 
\end{tikzpicture}  
\caption{The allowed types of tiles.}
\label{Tiles tribonacci}
\end{figure} 

Let $g_n$ denotes the number  of ways to tile a hexagonal strip of length
$n$ by using only the allowed tiles. It is convenient to define $g_0=1$, and
it is immediately clear that $g_1=1$, $g_2=2$.

\begin{tm} Let $g_n$ denote the number of all ways to tile a honeycomb
strip of length by using only the allowed tiles. Then
$$g_n=T_{n+2},$$
where $T_n$ denotes $n$-th tribonacci number.
\end{tm}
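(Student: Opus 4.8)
**

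The plan is to establish the claim by deriving a linear recurrence for $g_n$ and matching it to the tribonacci recurrence with the stated shift. First I would set up the standard last-hexagon case analysis: in any tiling of $H_n$, hexagon $n$ is covered either by a monomer, by a slanted dimer, or by a trimer of consecutive hexagons $\{n-2,n-1,n\}$. Since horizontal dimers are forbidden, these are the only options, and — crucially — in each case the remaining tiles form a valid tiling of a shorter \emph{honeycomb strip} with no dangling half-covered hexagons (because a slanted dimer on $\{n-1,n\}$ removes exactly the last two hexagons and a consecutive trimer removes exactly the last three). This yields $g_n = g_{n-1} + g_{n-2} + g_{n-3}$ for $n \ge 3$. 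I would illustrate the three endings with a figure analogous to Figure \ref{All cases}.

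Next I would verify the base cases. We have $g_0 = 1$, $g_1 = 1$, $g_2 = 2$ (already noted in the text: one monomer for $n=1$; two monomers or one slanted dimer for $n=2$). Comparing with Table \ref{tribonacci first values}: $T_2 = 1 = g_0$, $T_3 = 1 = g_1$, $T_4 = 2 = g_2$. So $g_n = T_{n+2}$ holds for $n = 0,1,2$, and since both $(g_n)$ and $(T_{n+2})$ satisfy the same order-3 linear recurrence $x_n = x_{n-1} + x_{n-2} + x_{n-3}$ for $n \ge 3$, an immediate induction gives $g_n = T_{n+2}$ for all $n \ge 0$.

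The only genuine subtlety — and the step I would be most careful about — is the claim that each of the three endings leaves behind a tiling of a genuine \emph{shorter honeycomb strip}, i.e. that no tile straddles the cut. A monomer on hexagon $n$ trivially leaves $H_{n-1}$. A slanted dimer covering hexagon $n$ must cover $\{n-1,n\}$ (it cannot be $\{n, n+1\}$), so it leaves $H_{n-2}$. A trimer covering hexagon $n$ consists of three consecutively numbered hexagons; the one containing $n$ as its largest index is $\{n-2,n-1,n\}$ (a trimer $\{n-1,n,n+1\}$ would need hexagon $n+1$, which does not exist), leaving $H_{n-3}$. Because horizontal dimers are excluded, there is no tile occupying positions $\{n-2, n\}$ that skips $n-1$, so the cut after the removed block is always along a clean edge. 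This is exactly the place where the prohibition of horizontal dimers does its work, and it is worth stating explicitly; once it is in place the rest is routine.
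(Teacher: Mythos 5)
Your proof is correct and takes essentially the same approach as the paper's: a case analysis on the tile covering the last hexagon yields $g_n=g_{n-1}+g_{n-2}+g_{n-3}$, and the initial values fix the shift $g_n=T_{n+2}$. Your explicit check that each ending leaves a clean shorter strip (no tile straddling the cut, thanks to the exclusion of horizontal dimers) is a point the paper leaves implicit, but it does not change the argument.
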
 
\begin{proof}
We start with an arbitrary tiling of a strip. There are three disjoint
cases involving the $n$-th hexagon. If the hexagon is tiled by a monomer,
then the rest of the strip can be tiled in $g_{n-1}$ ways. If it is covered by
a dimer, there are $g_{n-2}$ such tilings, and finally, if the rightmost
hexagon is covered by a trimer, the are $g_{n-3}$ such tilings. By summing
the respective numbers we obtain a recurrence that is the same as the
defining recurrence for the tribonacci numbers, and the initial values
determine the value of the shift.
\end{proof}
 
In the next part, we refine our results by counting the number of tilings with
a fixed number of trimers, dimers or monomers. We denote these numbers
by $t_{n,k}$, $u_{n,k}$ and $v_{n,k}$, respectively, where $n$, as usual,
denotes the length of a strip, and $k$ the number of tiles of a certain
kind. We can also fix the number of all types of tiles. Let $g_{n}^{k,l}$
denotes the number of all ways to tile a strip of a length $n$ using exactly
$k$ trimers, $l$ dimers and $n-3k-2l$ monomers. We list some first values
in the Table \ref{Initial value t_n,k}. From the definition it is clear
that $t_{n,k}=0$ for $k>\floor*{\frac{n}{3}}$, $u_{n,k}=0$ for
$k>\floor*{\frac{n}{2}}$ and $v_{n,k}=0$ for $k>n$. It is also convenient
to define $t_{0,0}=u_{0,0}=v_{0,0}=1$. For these sequences we can obtain
recursive relations in the obvious way, by considering the state of the last
hexagon to see whether it is covered by a trimer, by a dimer, or by a 
monomer. The recursive relations are: \begin{equation}
t_{n,k}=t_{n-1,k}+t_{n-2,k}+t_{n-3,k-1},\end{equation} \begin{equation}
u_{n,k}=u_{n-1,k}+u_{n-2,k-1}+u_{n-3,k}\end{equation} and \begin{equation}
v_{n,k}=v_{n-1,k-1}+v_{n-2,k}+v_{n-3,k}.\end{equation}

We can now list some first values of the corresponding triangles: 

\begin{table}[h!]\centering
$\begin{array}{ccc}
\begin{array}{c|ccc}
n/k & 0 & 1 \\
\hline
0 & 1\\
1 & 1\\
2 & 2\\
3 & 3& 1\\
4 & 5& 2 \\
5 & 8& 5\\
6 & 13 & 10 &1\\
7 & 21 & 20 & 3\\
8 & 34 & 38 & 9 
\end{array} & \begin{array}{c|ccccc}
n/k & 0 & 1 \\
\hline
0 & 1 \\
1 & 1 \\
2 & 1& 1\\
3 & 2& 2\\
4 & 3 & 3 & 1\\
5 & 4 & 6 & 3\\
6 & 6 &11 & 6 & 1\\
7 & 9 &18 & 13 & 4\\
8 & 13&30 & 27 & 10 & 1\\
\end{array} & \begin{array}{c|ccccccccc}
n/k & 0 & 1 & 2 & 3 &4 &5& 6 &7& 8\\
\hline
0 & 1\\
1 & 0&1 \\
2 & 1&0&1\\
3 & 1&2&0&1\\
4 & 1&2&3&0&1\\
5 & 2&3&3&4&0&1\\
6 & 2&6&6&4&5&0&1\\
7 & 3&7&12&10&5&6&0&1\\
8 & 4&12&16&20&15&6&7&0&1\\
\end{array}\\
t_{n,k} & u_{n,k} & v_{n,k} \\
\end{array}$
\caption{Initial values for $t_{n,k}$, $u_{n,k}$ and $v_{n,k}$.}
\label{Initial value t_n,k}
\end{table} 

The first and the second triangle of Table \ref{Initial value t_n,k}
are not in the OEIS, while the third one appears as A104578 \cite{OEIS},
the Padovan convolution triangle.
The same arguments as the ones used on $c_{n,k}$ shows that the rows
of those triangles do not have internal zeros, with the obvious exception
of the zeros appearing in the first descending subdiagonal of $v_{n,k}$.

Before we go any further, we introduce two closely related sequences
defined by Fibonacci-like recurrences of length three, namely the
Narayana's cows sequence (A000930) and the Padovan sequence (A000931).
We denote the $n$-th element of these sequences by $N_n$ and $P_n$,
respectively. The initial values are 
$N_{0}=N_{1}=N_{2}=1$ and $P_{0}=1$, $P_{1}=P_{2}=0$, and for $n\geq 3$ we
have recursive relation $N_n=N_{n-1}+N_{n-3}$ and $P_n=P_{n-2}+P_{n-3}$. 
We refer the reader to \cite{OEIS} for more details about those sequences.
In particular, we draw the reader's attention to the fact that there are several
other sequences referred to as the Narayana numbers, for example A001263,
a very important triangle of numbers refining the Catalan numbers and appearing
in many different contexts. In the rest of this paper, when we refer to
Narayana's numbers, we always mean A000930. 

We now take a closer look at sequences $t_{n,0}$, $u_{n,0}$ and
$v_{n,0}$, i.e., at the number of tilings where one type of tile is omitted.
The sequence $t_{n,0}=F_{n+1}$, since such tilings contain only slanted dimers
and monomers; since such tilings correspond to matchings in the path on $n$
vertices, they are counted by Fibonacci numbers.

The sequence $u_{n,0}$ counts the number of all ways to tile a length-$n$ strip by using only monomers and trimers, hence its elements satisfy the defining recurrence for the Narayana's cow sequence. Similarly, since the elements of the sequence $v_{n,0}$ are the numbers of all different tiling where monomers are omitted, they satisfy Padovan's recursion. We have $u_{n,0}=N_n$ and $v_{n,0}=P_{n+3}$. In the next three theorems we present connection between the number of tilings and above listed sequences. It turns out that the elements of the three triangles of Table \ref{Initial value t_n,k} can be expressed by convolution-like formulas involving the Fibonacci, the Narayana's and the Padovan numbers. Such formulas could have been anticipated from the second column of triangle $t_{n,k}$ which seems to be the (shifted) self-convolution of Fibonacci numbers and also from the name of the entry A104578 in OEIS.

\begin{tm}\label{thm t_n,k} For $n\geq 0$, the number of ways to tile
a strip with $n$ hexagons using exactly $k$ trimers is
\begin{equation}
t_{n,k}=\sum\limits_{\substack{i_0,\dots, i_{k}\geq 0\\ i_0+\cdots+i_{k}=n-2k+1}}F_{i_0}\cdots F_{i_{k}} .
\end{equation}  
\end{tm}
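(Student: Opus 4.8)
The plan is to give a bijective decomposition argument in the spirit of the proof of Theorem~\ref{c_(n,k) formula theorem}, exploiting the fact that the $k$ trimers cut the strip into $k+1$ independent monomer--dimer segments.

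First I would observe that in any tiling counted by $t_{n,k}$ the set of hexagons covered by trimers is determined by the tiling and consists of exactly $k$ pairwise disjoint blocks of three consecutively numbered hexagons. Deleting these $3k$ hexagons leaves $k+1$ (possibly empty) sub-strips $S_0,\dots,S_k$, read from left to right, of lengths $\ell_0,\dots,\ell_k\ge 0$ with $\ell_0+\cdots+\ell_k=n-3k$. Since horizontal dimers are forbidden, every remaining tile is a monomer (one hexagon) or a slanted dimer (two consecutive hexagons), hence occupies at most two consecutive positions; but consecutive sub-strips are separated by a trimer block of length $3$, so no tile can straddle two sub-strips, and the induced tiling of each $S_j$ uses only monomers and slanted dimers. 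Conversely, any choice of lengths $\ell_0,\dots,\ell_k\ge 0$ summing to $n-3k$ together with an arbitrary monomer--dimer tiling of each of the $k+1$ segments reassembles, by interleaving the segments with $k$ trimers, into a unique tiling counted by $t_{n,k}$. This is a bijection between tilings counted by $t_{n,k}$ and ordered $(k+1)$-tuples of monomer--dimer tilings whose lengths sum to $n-3k$.

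Using the earlier remark that $t_{m,0}=F_{m+1}$ (monomer--dimer tilings of a length-$m$ strip are matchings of $P_m$, hence Fibonacci-counted), the bijection yields
\[
t_{n,k}=\sum_{\substack{\ell_0,\dots,\ell_k\ge 0\\ \ell_0+\cdots+\ell_k=n-3k}} F_{\ell_0+1}\cdots F_{\ell_k+1}.
\]
Setting $i_j=\ell_j+1$ turns the constraints into $i_j\ge 1$ and $i_0+\cdots+i_k=(n-3k)+(k+1)=n-2k+1$, giving the stated sum over positive indices. Finally, since $F_0=0$, enlarging the index range to $i_j\ge 0$ only adds zero terms, so the formula holds exactly as written; the degenerate cases are covered automatically (when $n<3k$ both sides vanish, as $n-2k+1<k+1$ cannot be a sum of $k+1$ positive integers, and when $k=0$ it reduces to $t_{n,0}=F_{n+1}$).

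I do not expect a serious obstacle. The only point requiring a word of care is the claim that the trimer-covered hexagons split the strip into independent monomer--dimer regions, and this is immediate because a trimer spans three consecutive positions while a monomer or slanted dimer spans at most two, so nothing can cross a trimer block. As an alternative one could phrase the same decomposition as the generating-function identity $\sum_n t_{n,k}x^n = x^{3k}\bigl(\tfrac{1}{1-x-x^2}\bigr)^{k+1}$ and extract the coefficient of $x^n$, but the bijective version is shorter and matches the style of the preceding proofs.
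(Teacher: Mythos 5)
Your proposal is correct and follows essentially the same route as the paper's proof: the $k$ trimers cut the strip into $k+1$ monomer--dimer segments counted by $t_{m,0}=F_{m+1}$, and the index shift $i_j\mapsto i_j+1$ converts the constraint $\sum \ell_j = n-3k$ into $\sum i_j = n-2k+1$. Your extra remarks (no tile can straddle a trimer block, $F_0=0$ allows extending the range to $i_j\ge 0$, degenerate cases) only make explicit details the paper leaves implicit.
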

\begin{proof}
If there are no trimers in the tiling, one can only use dimers or monomers
to tile a strip and the number ways to do that is $t_{n,0}=F_{n+1}$.
If we use exactly $k$ trimers, those trimers divide our strip into
$k+1$ smaller strips. In this sense we allow the strip to be of a length
$0$ if two trimers are adjacent; the sub-strips of length 0 can also appear
at the beginning or at the end of a strip.
We have a strip with $n$ hexagons which is tiled with $k$ trimers, so there
are $n-3k$ hexagons left to tile. Since the position of each trimer is
arbitrary, the lengths of strips between and around them can vary from $0$
to $n-3k$, but the sum of the lengths must be constant, that is
$i_0+i_1+\dots+i_{k}=n-3k$. Each of those smaller strips can be tiled
only by dimers or monomers, hence in $t_{i_{j},0}$ ways, where
$0\leq j\leq k$. Summing this over all positions of the trimers we have: 
\begin{align*}
t_{n,k}=&\sum\limits_{\substack{i_0,\dots, i_{k}\geq 0\\ i_0+\cdots+i_{k}=n-3k}}t_{i_0,0}\cdots t_{i_{k},0}\\ 
=&\sum\limits_{\substack{i_0,\dots, i_{k}\geq 0\\ i_0+\cdots+i_{k}=n-3k}}F_{i_0+1}\cdots F_{i_{k}+1}\\ 
=&\sum\limits_{\substack{i_0,\dots, i_{k}\geq 0\\ i_0+\cdots+i_{k}=n-2k+1}}F_{i_0}\cdots F_{i_{k}} .
\end{align*}
\end{proof}
Note that Theorem \ref{thm t_n,k} allow us to express the tribonacci numbers
as a double sum \begin{equation}
T_{n+2}=\sum_{k=0}^nt_{n,k}=\sum_{k=0}^n\sum\limits_{\substack{i_0,\dots, i_{k}\geq 0\\ i_0+\cdots+i_{k}=n-2k+1}}F_{i_0}\cdots F_{i_{k}} .
\end{equation} 

\begin{tm}\label{thm u_n,k} For $n\geq 0$, the number of ways to tile
a strip with $n$ hexagons using exactly $k$ dimers is
\begin{equation}
u_{n,k}=\sum\limits_{\substack{i_0,\dots, i_{k}\geq 0\\ i_0+\cdots+i_{k}=n-2k}}N_{i_0}\cdots N_{i_{k}} .
\end{equation}  
\end{tm}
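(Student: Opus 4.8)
The plan is to mirror the proof of Theorem \ref{thm t_n,k} exactly, but with the roles of the tiles interchanged: here we condition on the positions of the $k$ dimers rather than the $k$ trimers. First I would handle the base case $k=0$: a tiling with no dimers uses only monomers and trimers, so by the earlier observation $u_{n,0}=N_n$, which is exactly the right-hand side when $k=0$ (the single term $N_{i_0}$ with $i_0=n$).

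For general $k$, the key step is the decomposition: the $k$ dimers, read left to right, split the strip $H_n$ into $k+1$ consecutive sub-strips $S_0,\dots,S_k$ (some possibly of length $0$, if two dimers are adjacent or a dimer sits at an end), each of which is tiled using only monomers and trimers. If the sub-strips have lengths $i_0,\dots,i_k$, then since the $k$ dimers consume $2k$ hexagons we must have $i_0+\cdots+i_k=n-2k$, with each $i_j\geq 0$. Conversely, any choice of nonnegative lengths summing to $n-2k$, together with a monomer/trimer tiling of each piece, reassembles uniquely into a tiling of $H_n$ with exactly $k$ dimers placed in the gaps. Each piece $S_j$ of length $i_j$ can be tiled in $u_{i_j,0}=N_{i_j}$ ways (using the convention $u_{0,0}=1=N_0$). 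Multiplying over the pieces and summing over all valid length-vectors gives
\begin{align*}
u_{n,k}=\sum\limits_{\substack{i_0,\dots, i_{k}\geq 0\\ i_0+\cdots+i_{k}=n-2k}}u_{i_0,0}\cdots u_{i_{k},0}
=\sum\limits_{\substack{i_0,\dots, i_{k}\geq 0\\ i_0+\cdots+i_{k}=n-2k}}N_{i_0}\cdots N_{i_{k}},
\end{align*}
which is the claimed formula. Unlike the trimer case, there is no index shift to absorb here, since $u_{i,0}=N_i$ directly rather than $N_{i+1}$.

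The only point requiring a little care — and the main (minor) obstacle — is checking that the bijection between "tilings of $H_n$ with $k$ dimers" and "ordered $(k+1)$-tuples of monomer/trimer-tiled sub-strips with total length $n-2k$" is genuinely a bijection, including the degenerate zero-length pieces and the edge cases $n-2k<0$ (where both sides are empty, consistent with $u_{n,k}=0$ for $k>\lfloor n/2\rfloor$) and $k=0$. This is entirely analogous to the corresponding verification in Theorem \ref{thm t_n,k}, so it can be stated briefly. One should also note that removing a dimer from a honeycomb strip really does leave two independent sub-strips that can be tiled with no interaction across the cut — true because dimers are slanted and occupy two hexagons in "the same column pair," so there is no overhang, exactly as in the unbreakability analysis of Lemma \ref{lemma unbreakable tiling}.
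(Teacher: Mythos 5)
Your proof is correct and follows essentially the same argument as the paper: condition on the positions of the $k$ dimers, which split the strip into $k+1$ sub-strips of lengths $i_0,\dots,i_k\geq 0$ summing to $n-2k$, each tiled by monomers and trimers in $N_{i_j}$ ways. The extra remarks on edge cases and the absence of an index shift are fine but not needed beyond what the paper states.
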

\begin{proof} We already know that the number of tilings with no dimers
is $u_{n,0}=N_n$. Now we look at the tilings of the strip with $n$ hexagons
that have exactly $k$ dimers. That leaves us with $n-2k$ hexagons to be
tiled by monomers and trimers. As in the proof of Theorem \ref{thm t_n,k},
we note that $k$ dimers divide the strip into $k+1$ smaller strips,
each of the length $0\leq i_j\leq n-2k$. Each smaller strip can be tiled in
$N_{i_j}$ ways, and after summing over all possible positions of
$k$ dimers we have \begin{equation*}
u_{n,k}=\sum\limits_{\substack{i_0,\dots, i_{k}\geq 0\\ i_0+\cdots+i_{k}=n-2k}}N_{i_0}\cdots N_{i_{k}}. 
\end{equation*} 
\end{proof}
The next result gives a new combinatorial interpretation of sequence
A104578 of \cite{OEIS}.
\begin{tm}\label{thm v_n,k} For $n\geq 0$, the number of ways to tile a
strip with $n$ hexagons using exactly $k$ monomers is \begin{equation}
v_{n,k}=\sum\limits_{\substack{i_0,\dots, i_{k}>0\\ i_0+\cdots+i_{k}=n+2k+3}}P_{i_0}\cdots P_{i_{k}} .
\end{equation}  
\end{tm}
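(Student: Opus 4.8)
The plan is to run the same template as in the proofs of Theorems \ref{thm t_n,k} and \ref{thm u_n,k}, but now with the monomers playing the role of the ``separating'' tiles. First I would invoke the base case already recorded above: a strip tiled using \emph{no} monomers is tiled only by dimers and trimers, so deleting its rightmost tile (a dimer or a trimer) gives $v_{m,0}=v_{m-2,0}+v_{m-3,0}$, and together with $v_{0,0}=1$, $v_{1,0}=0$, $v_{2,0}=1$ this is exactly the Padovan recursion shifted by three, i.e. $v_{m,0}=P_{m+3}$.

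Next I would set up the decomposition. In a tiling of the length-$n$ strip that uses exactly $k$ monomers, reading the monomers from left to right splits the strip into an ordered sequence of $k+1$ (possibly empty) sub-strips of lengths $j_0,\dots,j_k\ge 0$ with $j_0+\cdots+j_k=n-k$, since the $k$ monomers occupy $k$ hexagons; each sub-strip is tiled using only dimers and trimers. Conversely, choosing such lengths together with a dimer/trimer tiling of each sub-strip reassembles uniquely into a tiling of the whole strip with exactly $k$ monomers, the empty sub-strips recording adjacent monomers or a monomer flush against an end of the strip (exactly as in the proof of Theorem \ref{thm t_n,k}). Summing over all placements and using $v_{j_\ell,0}=P_{j_\ell+3}$,
\[
v_{n,k}=\sum_{\substack{j_0,\dots,j_k\ge 0\\ j_0+\cdots+j_k=n-k}}v_{j_0,0}\cdots v_{j_k,0}=\sum_{\substack{j_0,\dots,j_k\ge 0\\ j_0+\cdots+j_k=n-k}}P_{j_0+3}\cdots P_{j_k+3}.
\]

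Finally I would reindex by $i_\ell=j_\ell+3$: the constraints $j_\ell\ge 0$ become $i_\ell\ge 3$, and $j_0+\cdots+j_k=n-k$ becomes $i_0+\cdots+i_k=(n-k)+3(k+1)=n+2k+3$. The one point requiring care is to reconcile the lower bound $i_\ell\ge 3$ with the $i_\ell>0$ appearing in the statement: since $P_1=P_2=0$, every composition in which some $i_\ell\in\{1,2\}$ contributes a zero product, so we may relax $i_\ell\ge 3$ to $i_\ell>0$ without changing the value of the sum — whereas $i_\ell=0$ must stay forbidden, because $P_0=1\ne 0$. This yields precisely the claimed formula. I do not anticipate a genuine obstacle: the argument is structurally identical to the two preceding theorems, and the only subtlety is this bookkeeping reconciling the summation range with the vanishing of $P_1$ and $P_2$.
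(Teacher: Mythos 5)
Your proposal is correct and follows essentially the same route as the paper: split the strip at the $k$ monomers into $k+1$ (possibly empty) sub-strips tiled by dimers and trimers only, count each in $v_{j,0}=P_{j+3}$ ways, and reindex $i_\ell=j_\ell+3$ to get the stated sum. Your extra remark justifying the relaxation from $i_\ell\ge 3$ to $i_\ell>0$ via $P_1=P_2=0$ (while keeping $i_\ell=0$ excluded since $P_0=1$) is a correct bookkeeping point that the paper passes over silently.
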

\begin{proof} The proof will be analogous to the two previous proofs.
The number of tilings with no monomers is $v_{n,0}=P_{n+3}$. A monomer
does not divide our strip, but if it first appears in position $i$, we
will consider strips left and right from it. We count the number
of tilings of the strip $H_n$ that have exactly $k$ monomers.
That leaves us $n-k$ untiled hexagons. Omitting $k$ hexagons leaves us
with with $k+1$ smaller strips, each of the length $0\leq i_j\leq n-k$.
Each smaller strip can be tiled in $P_{i_j+3}$ ways, and after summing
over all possible positions of $k$ monomers we have \begin{align*}
v_{n,k}=&\sum\limits_{\substack{i_0,\dots, i_{k}\geq 0\\ i_0+\cdots+i_{k}=n-k}}P_{i_0+3}\cdots P_{i_{k}+3}\\ 
=&\sum\limits_{\substack{i_0,\dots, i_{k}>0\\ i_0+\cdots+i_{k}=n+2k+3}}P_{i_0}\cdots P_{i_{k}}. \end{align*} \end{proof}

Now we turn our attention to the number of tilings of a strip of length $n$
with numbers of all types of tiles fixed. Recall that number of tiling
consisting of $k$ trimers, $l$ dimers and $n-3k-2l$ monomers is denoted by
$g ^{k,l}_n$.  
In the next theorem we give a closed form formula for $g^{k,l}_n$.

\begin{tm}\label{thm t_n,k,l} For $n\geq 0$, the number of ways to tile
a strip with $n$ hexagons using exactly $k$ trimers, $l$ dimers and
$n-2k-l$ monomers is \begin{equation}
g^{k,l}_{n}=\binom{n-3k-l}{l}\binom{n-2k-l}{k}.
\end{equation} 
\end{tm}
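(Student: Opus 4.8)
The plan is to show that, because horizontal dimers are forbidden in this section and trimers must occupy consecutively numbered hexagons, a tiling of $H_n$ retains exactly the information of its left-to-right sequence of tile sizes, and then to count such sequences directly.

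First I would record the following structural observation. A slanted dimer covers a pair $\{i,i+1\}$ of consecutively numbered hexagons, and a trimer covers a triple $\{i,i+1,i+2\}$; since the odd-numbered hexagons lie in the bottom row and the even-numbered ones in the top row, the two hexagons of such a pair always lie in different rows, and in a trimer the two outer hexagons share a row while the middle one does not. In particular each of these tiles is the unique tile covering the hexagons in question, so its orientation (ascending or descending, ``bump up'' or ``bump down'') is forced by the parity of its leftmost index. Hence the map sending a tiling to the sequence of block sizes read from left to right is a bijection between tilings of $H_n$ and ordered compositions of $n$ into parts $1$ (monomer), $2$ (dimer) and $3$ (trimer). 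Forgetting the part sizes recovers the tribonacci recurrence $g_n=g_{n-1}+g_{n-2}+g_{n-3}$ already established.

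Next I would count those compositions that use exactly $k$ threes, $l$ twos and $n-3k-2l$ ones. Such a composition is a sequence of $k+l+(n-3k-2l)=n-2k-l$ tiles. To specify it, first choose which $k$ of these $n-2k-l$ positions hold trimers, in $\binom{n-2k-l}{k}$ ways; then among the remaining $n-3k-l$ positions choose the $l$ that hold dimers, in $\binom{n-3k-l}{l}$ ways; the remaining $n-3k-2l$ positions are monomers. Multiplying yields
$$g^{k,l}_{n}=\binom{n-3k-l}{l}\binom{n-2k-l}{k},$$
which is the claimed formula. Equivalently, the count is the trinomial coefficient $\frac{(n-2k-l)!}{k!\,l!\,(n-3k-2l)!}$, and factoring this as a product of two ordinary binomial coefficients gives the same expression.

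The only step needing genuine care is the bijection: one must be sure the honeycomb geometry contributes no freedom beyond the composition, which is exactly where the hypotheses that horizontal dimers are excluded and that trimers span three consecutive hexagons are used. After that is in place, the remainder is an elementary two-step selection and the identity is immediate; no further estimates or inductions are required.
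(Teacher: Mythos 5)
Your proposal is correct and follows essentially the same route as the paper: both encode a tiling as the left-to-right word of its tile types (a sequence with $k$ trimers, $l$ dimers and $n-3k-2l$ monomers) and then count such words, the only cosmetic difference being that you pick the trimer and dimer positions in two steps to get the product of binomials directly, while the paper counts the words by the multinomial coefficient $\frac{(n-2k-l)!}{k!\,l!\,(n-3k-2l)!}$ and factors it. Your explicit justification that the honeycomb geometry forces each tile's orientation (so the word determines the tiling) is a point the paper treats as obvious, and it is a welcome addition.
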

\begin{proof}
Consider a set consisting of all arbitrary tilings of a length-$n$ strip that
have exactly $k$ trimers, $l$ dimers and $n-3k-2l$ monomers. To prove this
theorem, we establish a 1-1 correspondence between that set and the set of
all permutations of $n-2k-l$ elements where we have $k$ elements $t$, $l$
elements $d$ and $n-3k-2l$ elements $m$. From an arbitrary permutation we
obtain the corresponding tiling as follows: we replace each element $t$ with 
a trimer, each element $d$ with a dimer, and each element $m$ with a monomer.
In this manner we obtained a tiling of a strip of length $n$ with prescribed
number of tiles of each type. For example, the permutation $tmdmt$
corresponds to the tiling shown in the Figure \ref{tmdmt}.
\begin{figure}[h!]\centering
\begin{tikzpicture}[scale=0.55]

\draw [line width=0.25mm] (A1)--(B1)--(C1)--(H)--(I)--(J)--(K)--(J1)--(K1)--(L1)--(G1)--(F1)--(A1); 
\draw [line width=0.25mm] (A2)--(B2)--(C2)--(D2)--(E2)--(F2)--(A2); 
\draw [line width=0.25mm] (A3)--(B3)--(C3)--(H2)--(I2)--(J2)--(K2)--(L2)--(E3)--(F3)--(A3); 
\draw [line width=0.25mm] (G3)--(H3)--(I3)--(J3)--(K3)--(L3)--(G3); 
\draw [line width=0.25mm] (A4)--(B4)--(C4)--(H4)--(I4)--(J4)--(K4)--(L4)--(E5)--(F5)--(A5)--(B5)--(A4); 
\draw[dashed,opacity=0.4] (C1)--(D1)--(E1) (D1)--(K) (C3)--(D3) (D4)--(E4)--(D5) (E4)--(F4); 
\end{tikzpicture}  
\caption{Tiling corresponding to the permutation $tmdmt$.} \label{tmdmt}
\end{figure}
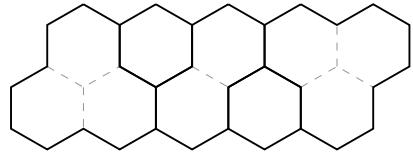 
In an obvious way we can also obtain a permutation from a given tiling.
Since the total number of permutations of this set is
$\frac{(n-2k-l)!}{k!l!(n-3k-2l)!}$, we arrive to:
\begin{align*}
g^{k,l}_n&=\frac{(n-2k-l)!}{k!l!(n-3k-2l)!}\cdot\frac{(n-3k-l)!}{(n-3k-l)!}\\
&=\frac{(n-3k-l)!}{l!(n-3k-2l)!}\cdot\frac{(n-2k-l)!}{k!(n-3k-l)!}\\
&=\binom{n-3k-l}{l}\binom{n-2k-l}{k}.
\end{align*}
 \end{proof}
 
From Theorem \ref{thm t_n,k,l} we arrive to yet another identity for
tribonacci numbers: \begin{equation}
T_{n+2}=\sum\limits_{k=0}^{\floor*{\frac{n}{3}}}\sum\limits_{l=0}^{\floor*{\frac{n-3k}{2}}}\binom{n-3k-l}{l}\binom{n-2k-l}{k} \end{equation} 
Specially, if we set $k=0$ in the first equation we have \begin{align*}
t_{n,0}&=\sum\limits_{l=0}^{\floor*{\frac{n}{2}}}\binom{n-l}{l}\binom{n-l}{0}\\
&=\sum\limits_{l=0}^{\floor*{\frac{n}{2}}}\binom{n-l}{l}\\
&=F_{n+1}.
\end{align*}
Since Theorem \ref{thm t_n,k,l} gives us the number of all tilings using 
the prescribed number of tiles of each type, we can express values $t_{n,k}$
and $u_{n,k}$ in a new way by summing over $l$ and $k$, respectively.

\begin{cor} For $n\geq 0$, \begin{equation}
t_{n,k}=\sum\limits_{l=0}^{\floor*{\frac{n-3k}{2}}}\binom{n-3k-l}{l}\binom{n-2k-l}{k},
\end{equation} and \begin{equation}
u_{n,l}=\sum\limits_{k=0}^{\floor*{\frac{n-2l}{3}}}\binom{n-3k-l}{l}\binom{n-2k-l}{k}.
\end{equation}   
\end{cor}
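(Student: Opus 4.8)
The plan is to derive both identities by summing the closed-form expression of Theorem \ref{thm t_n,k,l} over the ``other'' tile count. The key observation is bookkeeping: a tiling counted by $t_{n,k}$ uses exactly $k$ trimers, some number $l$ of dimers, and then necessarily $n-3k-2l$ monomers, so it is counted by $g^{k,l}_n$ for that particular $l$. Since the number of dimers is an invariant of a tiling, the tilings with exactly $k$ trimers are partitioned according to their number of dimers, and hence $t_{n,k}=\sum_{l}g^{k,l}_n$, the sum running over all $l$ for which such a tiling can exist. An entirely symmetric remark gives $u_{n,l}=\sum_{k}g^{k,l}_n$.

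First I would fix the summation ranges. For $g^{k,l}_n$ to count a nonempty family we need $n-3k-2l\ge 0$, i.e. $0\le l\le\floor*{\frac{n-3k}{2}}$ in the first case and $0\le k\le\floor*{\frac{n-2l}{3}}$ in the second; for $l$ (resp.\ $k$) just past these bounds the summand $\binom{n-3k-l}{l}\binom{n-2k-l}{k}$ already vanishes because the top index of the relevant binomial drops below its bottom index, so the truncation is harmless and the stated bounds are exactly the right ones. Then substituting $g^{k,l}_n=\binom{n-3k-l}{l}\binom{n-2k-l}{k}$ from Theorem \ref{thm t_n,k,l} into $t_{n,k}=\sum_{l=0}^{\floor*{\frac{n-3k}{2}}}g^{k,l}_n$ yields the first identity, and into $u_{n,l}=\sum_{k=0}^{\floor*{\frac{n-2l}{3}}}g^{k,l}_n$ yields the second.

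There is no genuine obstacle here: once Theorem \ref{thm t_n,k,l} is in hand the corollary is immediate, and the only things to state carefully are that the relevant classes of tilings really do form a partition (clear, since the number of tiles of each type is determined by the tiling) and that the summation limits coincide with the natural support of $g^{k,l}_n$. As a sanity check one may note that setting $k=0$ in the first formula recovers $t_{n,0}=\sum_{l}\binom{n-l}{l}=F_{n+1}$, consistent with $t_{n,0}=F_{n+1}$ established earlier, and that summing the first identity over $k$ reproduces the double-sum expression for $T_{n+2}$ already recorded after Theorem \ref{thm t_n,k,l}.
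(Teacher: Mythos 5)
Your proposal is correct and follows exactly the paper's route: the corollary is obtained by partitioning the tilings counted by $t_{n,k}$ (resp.\ $u_{n,l}$) according to their number of dimers (resp.\ trimers) and summing the closed form of Theorem \ref{thm t_n,k,l} over $l$ (resp.\ $k$), with the summation bounds determined by $n-3k-2l\geq 0$. Your extra remarks on the vanishing of out-of-range summands and the $k=0$ sanity check are consistent with, and slightly more explicit than, the paper's brief justification.
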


\section{Some identities involving tribonacci numbers}

In this section we prove, in a combinatorial way, several identities involving 
the tribonacci, Narayana's, Padovan and Fibonacci numbers. We begin with a well-known identity for tribonacci numbers and we give it a
new combinatorial interpretation: 
\begin{tm} For $n\geq 4$, $$T_n+T_{n-4}=2T_{n-1}.$$\end{tm}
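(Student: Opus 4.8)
The plan is to give a bijective proof, using the identity $g_n=T_{n+2}$ established in the theorem above, where $g_n$ counts the tilings of the honeycomb strip $H_n$ by monomers, slanted dimers, and trimers. Writing $T_j=g_{j-2}$, the claimed identity $T_n+T_{n-4}=2T_{n-1}$ is equivalent to $g_{n-2}+g_{n-6}=2g_{n-3}$, i.e., the number of tilings of $H_{n-2}$ plus the number of tilings of $H_{n-6}$ equals twice the number of tilings of $H_{n-3}$. For $n\in\{4,5,6,7\}$ some of these strips have nonpositive length; as usual we read $g_j=0$ for $j<0$ (consistent with $T_0=T_1=0$), and these few cases can in any event be checked by hand. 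So it suffices to exhibit a bijection between the disjoint union of the set of tilings of $H_{n-2}$ and the set of tilings of $H_{n-6}$ on one side, and two disjoint copies, say copy $A$ and copy $B$, of the set of tilings of $H_{n-3}$ on the other.

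For the bijection I would classify each tiling of $H_{n-2}$ by the tile covering its last hexagon. If that tile is a monomer, delete it to get a tiling of $H_{n-3}$ and place it in copy $A$; this produces all of copy $A$, the inverse being ``append a monomer''. If the last tile is a slanted dimer, delete it (leaving a tiling of $H_{n-4}$) and then append a monomer, obtaining a tiling of $H_{n-3}$ that ends in a monomer, which we put in copy $B$. If the last tile is a trimer, delete it (leaving a tiling of $H_{n-5}$) and append a slanted dimer, obtaining a tiling of $H_{n-3}$ ending in a dimer, again in copy $B$. Finally, a tiling of $H_{n-6}$ is sent to the tiling of $H_{n-3}$ obtained by appending a trimer; it ends in a trimer and goes to copy $B$. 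Each of the four rules is manifestly invertible, the three rules feeding copy $B$ have pairwise disjoint images (distinguished by whether the resulting tiling of $H_{n-3}$ ends in a monomer, a dimer, or a trimer) that exhaust copy $B$, and copy $A$ is hit exactly once; hence the map is a bijection.

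Counting both sides of this bijection then gives the result: the left side splits as $g_{n-3}+g_{n-4}+g_{n-5}$ tilings of $H_{n-2}$ (by last tile) plus $g_{n-6}$ tilings of $H_{n-6}$, which is $g_{n-2}+g_{n-6}$ by the tribonacci recurrence for $g$; the right side is $g_{n-3}$ (copy $A$) plus $g_{n-4}+g_{n-5}+g_{n-6}=g_{n-3}$ (copy $B$, again by the recurrence), i.e., $2g_{n-3}$. I do not anticipate a genuine obstacle: the only thing needing care is the bookkeeping of strip lengths when deleting and appending tiles, so that each appended monomer, dimer, or trimer makes the total length exactly $n-3$, and the verification that the degenerate boundary cases $n\le 7$ are handled correctly by the zero-length convention.
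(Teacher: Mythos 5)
Your proposal is correct and is essentially the paper's own argument: after translating to $g_{n-2}+g_{n-6}=2g_{n-3}$ via $g_n=T_{n+2}$, your four rules are exactly the paper's bijection between $\mathcal{G}_{n-2}\cup\mathcal{G}_{n-6}$ and two copies of $\mathcal{G}_{n-3}$, merely described in the inverse direction (deleting/swapping the last tile rather than appending/swapping). The concluding recount via the tribonacci recurrence is redundant but harmless.
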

\begin{proof} Let $\mathcal{G}_n$ denotes the set of all tilings of a length-$n$ strip,  $\mathcal{M}_n$, $\mathcal{D}_n$ and $\mathcal{T}_n$ the tilings ending with a monomer, dimer or trimer, respectively. As before, the cardinal number of the set $\mathcal{G}_n$ is $g_n$. It is clear that $\mathcal{T}_n=\mathcal{M}_n\dot\cup\mathcal{D}_n\dot\cup\mathcal{T}_n$. To prove the theorem we have to establish 1-1 correspondence between sets $\mathcal{G}_{n-2}\cup\mathcal{G}_{n-6}$ and $\mathcal{G}_{n-3}\times\left\lbrace0,1\right\rbrace$.

To each tiling from the set $\mathcal{G}_{n-3}$ we add a monomer at the end to obtain an element of $\mathcal{M}_{n-2}$. Thus, we obtained bijection between the sets $\mathcal{G}_{n-3}$ and $\mathcal{M}_{n-2}$. In this way, we have used all the tiling of the set $\mathcal{G}_{n-3}$ once. Now we take the tilings from the set $\mathcal{G}_{n-3}$ again, and if it ends with a trimer, i.e. if it is an element of $\mathcal{T}_{n-3}$, we remove it
to obtain a tiling of length $n-6$, i.e., element of a set $\mathcal{G}_{n-6}$. If it ends with a dimer (an element of $\mathcal{D}_{n-3}$), we remove it
and replace it with a trimer to obtain element from $\mathcal{T}_{n-2}$.  Finally, if the tiling is an element of $\mathcal{M}_{n-3}$, we replace the last monomer with a
dimer, to obtain an element of $\mathcal{D}_{n-2}$. In this way we have used every tiling of a length $n-3$ twice and obtained all tilings of a length $n-2$ and $n-6$ exactly once. Diagram that visualize 1-1 correspondence between the two sets is shown in the Figure \ref{1-1 cor}.  

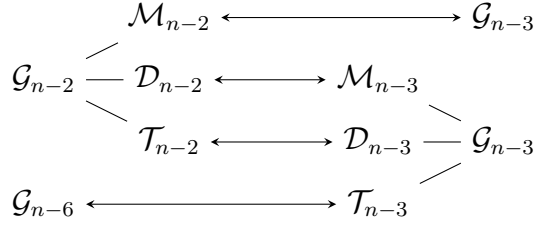
\begin{figure}[h!]\centering
\begin{tikzpicture}[scale=0.55]

\node[] (p1) at (0,0) {$\mathcal{G}_{n-2}$};
\node[] (p2) at (3,1.5) {$\mathcal{M}_{n-2}$};
\node[] (p3) at (3,0) {$\mathcal{D}_{n-2}$};
\node[] (p4) at (3,-1.5) {$\mathcal{T}_{n-2}$};
\node[] (p5) at (0,-3) {$\mathcal{G}_{n-6}$};
\node[] (p6) at (11,1.5) {$\mathcal{G}_{n-3}$};
\node[] (p7) at (11,-1.5) {$\mathcal{G}_{n-3}$};
\node[] (p8) at (8,0) {$\mathcal{M}_{n-3}$};
\node[] (p9) at (8,-1.5) {$\mathcal{D}_{n-3}$};
\node[] (p10) at (8,-3) {$\mathcal{T}_{n-3}$};
\draw [stealth-stealth](p2)--(p6);
\draw [stealth-stealth](p3)--(p8);
\draw [stealth-stealth] (p4)--(p9);
\draw [stealth-stealth] (p5)--(p10);
\draw (p1)--(p2) (p1)--(p3) (p1)--(p4) (p7)--(p8) (p7)--(p9) (p7)--(p10);

\end{tikzpicture}  
\caption{1-1 correspondence between sets $\mathcal{G}_{n-2}\cup\mathcal{G}_{n-6}$ and $\mathcal{G}_{n-3}\times\left\lbrace0,1\right\rbrace$.} \label{1-1 cor}
\end{figure} 

It follows that $g_{n-2}+g_{n-6}=2g_{n-3}$,
and since $g_{n}=T_{n+2}$, the theorem follows.\end{proof}

For the next few identities is is useful to recall the definition of
breakability. We say that a tiling of a honeycomb strip is breakable
at the position $k$ if given tiling can be divided into two tiled
strips, the first one containing the leftmost $k$ hexagons and the second 
one containing the rest.   

Our next identity differentiates tilings based on the breakability.
\begin{tm} For any integers $m,n\geq 1$ we have the identity
$$T_{m+n}=T_{m}T_{n}+T_{m+1}T_{n+1}+T_{m-1}T_{n}+T_{m}T_{n-1}.$$
\end{tm}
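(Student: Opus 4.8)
The plan is to translate the identity into the language of the tiling numbers $g_n$ (recall $g_n=T_{n+2}$) and prove it by a disjoint-case count on tilings of a strip of length $m+n-2$, split according to the behaviour at position $m-1$. Since $g_{m+n-2}=T_{m+n}$, $g_{m-1}g_{n-1}=T_{m+1}T_{n+1}$, $g_{m-2}g_{n-2}=T_mT_n$, $g_{m-2}g_{n-3}=T_mT_{n-1}$ and $g_{m-3}g_{n-2}=T_{m-1}T_n$, the asserted identity is equivalent to
\[
g_{m+n-2}=g_{m-1}g_{n-1}+g_{m-2}g_{n-2}+g_{m-2}g_{n-3}+g_{m-3}g_{n-2}.
\]

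First I would note that, among the three allowed tiles, a monomer never straddles a break line, a slanted dimer covering hexagons $\{i,i+1\}$ straddles position $m-1$ exactly when $i=m-1$, and a trimer covering $\{i,i+1,i+2\}$ straddles it exactly when $i=m-1$ or $i=m-2$. Since each tile is an interval of consecutive hexagons and the cut at position $m-1$ is a single point, at most one tile can straddle it, so every tiling of the length-$(m+n-2)$ strip lies in exactly one of four disjoint classes: (i) breakable at position $m-1$, which by definition of breakability contributes $g_{m-1}g_{n-1}$; (ii) containing the slanted dimer $\{m-1,m\}$, which leaves tiled strips of lengths $m-2$ and $n-2$ and contributes $g_{m-2}g_{n-2}$; (iii) containing the trimer $\{m-1,m,m+1\}$, leaving strips of lengths $m-2$ and $n-3$ and contributing $g_{m-2}g_{n-3}$; and (iv) containing the trimer $\{m-2,m-1,m\}$, leaving strips of lengths $m-3$ and $n-1$ and contributing $g_{m-3}g_{n-2}$. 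Summing the four contributions gives the displayed identity, and substituting $g_j=T_{j+2}$ yields the theorem. (One may equally cut at position $n-1$; by symmetry this gives the same relation.)

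The only delicate point, and the step I would treat with care, is the boundary behaviour when $m$ or $n$ is small: for instance $m=1$ forces a cut at position $0$, and several of the straddling configurations then involve nonexistent hexagons. This is handled by adopting the conventions $g_{-1}=T_1=0$ and $g_{-2}=T_0=0$, which are consistent with the tribonacci initial values; under them the impossible cases contribute $0$ and the count above remains valid. Alternatively one can simply check the finitely many base cases with $m,n\in\{1,2\}$ directly against the tribonacci recurrence. Everything else is a routine disjoint-case enumeration of exactly the kind already used in the preceding identities.
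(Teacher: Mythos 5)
Your proof is correct and takes essentially the same route as the paper: cut the length-$(m+n-2)$ strip at position $m-1$, classify tilings by whether they are breakable there or by which tile (the slanted dimer or one of the two trimers) straddles the cut, and translate back via $g_j=T_{j+2}$. The only differences are your more explicit handling of the small-$m,n$ boundary conventions, which the paper leaves implicit, and a harmless slip in case (iv) where you write ``lengths $m-3$ and $n-1$'' although the right-hand piece has length $n-2$ (your stated contribution $g_{m-3}g_{n-2}$ is the correct one).
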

\begin{proof}
We consider an arbitrary tiling of a strip of length
$m+n-2$. If the tiling is breakable at position $m-1$, we divide
it into two strips of a length $m-1$ and $n-1$. Hence, the total number
of tiling in this case is $g_{m-1}g_{n-1}$. If the tiling is not breakable
at position $m-1$, that means that either a dimer or a trimer is blocking it.
If the dimer is preventing the tiling from breaking, there are strips of
lengths $m-2$ and $n-2$ on each side, so the total number of tilings in this
case is $g_{m-2}g_{n-2}$. If the trimer is blocking it, it can reduce
the length of the left or of the right strip by two. So the total number
of tilings in this case is $g_{m-3}g_{n-2}+g_{m-2}g_{n-3}$.
By summing the contributions of all these cases we obtain
$g_{m+n-2}=g_{m-1}g_{n-1}+g_{m-2}g_{n-2}+g_{m-3}g_{n-2}+g_{m-2}g_{n-3}$,
and by using the equality $g_n=T_{n+2}$ we have
$T_{m+n}=T_{m}T_{n}+T_{m+1}T_{n+1}+T_{m-1}T_{n}+T_{m}T_{n-1}$. 
\end{proof}  

The next identity was proved by Frontczak \cite{Frontczak} by using generating functions. Here we provide a combinatorial interpretation. 

\begin{tm}
For any integer $n\geq 0$ we have the identity
$$T_{n+2}=\sum\limits_{k=0}^{n+1}F_kT_{n-k}.$$
\end{tm}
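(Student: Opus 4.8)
The plan is to work entirely inside the combinatorial model of Section 5. By the theorem just established, $g_n = T_{n+2}$, where $g_n$ counts the tilings of a honeycomb strip of length $n$ by monomers, slanted dimers and trimers (horizontal dimers being forbidden throughout this section). Moreover $t_{n,0}=F_{n+1}$ counts exactly those tilings that use no trimer, since these are the monomer/slanted-dimer tilings, which correspond to matchings of the path $P_n$. Writing $T_{n-k}=g_{n-k-2}$, it therefore suffices to prove
$$g_n = F_{n+1} + \sum_{k=1}^{n-2} F_k\, T_{n-k},$$
and then to re-index the right-hand side into the stated form.

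First I would split the $g_n$ tilings of the length-$n$ strip according to whether a trimer occurs. The trimer-free ones number $t_{n,0}=F_{n+1}$. For a tiling that uses at least one trimer, record the position of its \emph{first} trimer, say it occupies the consecutive hexagons $\{k,k+1,k+2\}$, so $1\le k\le n-2$. The key claim — and this is where the ban on horizontal dimers does the real work — is that the tiling is breakable immediately before and immediately after this trimer. Indeed, a tile covering hexagon $k-1$ and crossing into the trimer would have to be a slanted dimer $\{k-1,k\}$ (impossible, as $k$ belongs to the trimer), or a trimer containing a hexagon numbered below $k$ (impossible, since the first trimer is at $k$), or a horizontal dimer $\{k-1,k+1\}$ (forbidden); the mirror argument with $\{k+1,k+3\}$ handles the right boundary. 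Hence such a tiling splits uniquely as a trimer-free tiling of the segment $\{1,\dots,k-1\}$ (there are $t_{k-1,0}=F_k$ of these), followed by the trimer, followed by an arbitrary tiling of the segment $\{k+3,\dots,n\}$ (there are $g_{n-k-2}=T_{n-k}$ of these). Summing over $k$ shows that the tilings using at least one trimer number $\sum_{k=1}^{n-2}F_k\,T_{n-k}$, which together with the trimer-free count yields the displayed identity.

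It remains to reconcile this with the statement $T_{n+2}=\sum_{k=0}^{n+1}F_k T_{n-k}$, which is pure bookkeeping. Since $F_0=0$ and $T_0=T_1=0$, the terms with $k\in\{0,n-1,n\}$ contribute nothing, so $\sum_{k=0}^{n+1}F_kT_{n-k}=\sum_{k=1}^{n-2}F_kT_{n-k}+F_{n+1}T_{-1}$; and extending the tribonacci recurrence one step below its stated range gives $T_{-1}=T_2-T_1-T_0=1$, so $F_{n+1}T_{-1}=F_{n+1}$. Substituting $g_n=T_{n+2}$ then gives the claim. The only genuine obstacle is the breakability claim at the first-trimer boundary; everything after that is routine, though I would state the convention $T_{-1}=1$ explicitly so the range of summation in the final identity is justified.
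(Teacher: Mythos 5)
Your proposal is correct and follows essentially the same route as the paper: both count the tilings of a length-$n$ strip containing at least one trimer by conditioning on the position of the leftmost trimer, use $t_{k-1,0}=F_k$ for the trimer-free left segment and $g_{n-k-2}=T_{n-k}$ for the right segment, and then extend the sum using $F_0=0$, $T_0=T_1=0$ and $T_{-1}=1$. Your explicit justification of breakability around the first trimer and of the boundary terms is just a more detailed write-up of the paper's argument.
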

\begin{proof}
We prove this theorem by counting all ways to tile a strip by using at
least one trimer. The total number of ways to tile a length-$n$ strip without
trimers is $t_{n,0}=F_{n+1}$, hence the number of tilings having at least
one trimer is $T_{n+2}-F_{n+1}$. On the other hand, we can count the same
tilings by observing where the first trimer appears. If the leftmost
trimer occupies
hexagons $\left\lbrace i,i+1,i+2\right\rbrace$, we say that the position
of trimer is $i$. So, all possible positions range from $1$ to $n-2$.
If a trimer first appears at position $k$, the leftmost $k-1$ hexagons
are tiled only by monomers and dimers, and the number of all ways
to do that is $F_k$. The rest of the strip, of length $n-k-2$, can be tiled
in $T_{n-k}$ ways. By summing over all possible positions of the leftmost
trimer, we have $T_{n+2}-F_{n+1}=\sum\limits_{k=1}^{n-2}F_{n}T_{n-k}$. By using
$T_{n-3}=T_{n}-T_{n-1}-T_{n-2}$, one can extend the tribonacci numbers
to negative integers and obtain $T_{-1}=1$. Since $T_{1}=T_{0}=0$,
the sum above can be extended to obtain
$T_{n+2}=\sum\limits_{k=1}^{n+1}F_{k}T_{n-k}$,
which concludes our proof.  
\end{proof}

\begin{tm} For any integer $n\geq 0$ we have the identity
$$T_{n+2}=\sum\limits_{k=0}^{n}N_{k}T_{n-k}+N_{n}.$$ \end{tm}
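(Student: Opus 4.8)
The plan is to prove the identity by double counting the set of all tilings of a length-$n$ honeycomb strip with monomers, slanted dimers, and trimers (horizontal dimers still being forbidden). On one hand this set has cardinality $g_n = T_{n+2}$. On the other hand I would split the tilings into two classes according to whether or not a dimer is used, show that the dimer-free tilings contribute the term $N_n$ and that the tilings containing at least one dimer contribute $\sum_{k=0}^{n} N_k T_{n-k}$, and then simply add the two contributions.

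The dimer-free class is immediate: a tiling of $H_n$ with no dimer uses only monomers and trimers, and the number of these is $u_{n,0}=N_n$, which is exactly the lone summand at the end of the identity. For the other class I would classify a tiling by the position of its leftmost dimer. Since every dimer here is slanted, it occupies a pair $\{i,i+1\}$ of consecutively numbered hexagons. If the leftmost dimer is $\{i,i+1\}$ for some $1\le i\le n-1$, then the $i-1$ hexagons to its left carry a tiling with no dimers, i.e.\ one of the $u_{i-1,0}=N_{i-1}$ monomer-and-trimer tilings of $H_{i-1}$, while the $n-i-1$ hexagons to its right carry an arbitrary tiling of $H_{n-i-1}$, of which there are $g_{n-i-1}=T_{n-i+1}$. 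Summing over $i$ and setting $j=i-1$ gives $\sum_{j=0}^{n-2}N_j T_{n-j}$ tilings containing at least one dimer.

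To finish I would reconcile this sum with the one in the statement: because $T_0=T_1=0$, the two boundary terms $N_{n-1}T_1$ and $N_nT_0$ are zero, so $\sum_{j=0}^{n-2}N_jT_{n-j}=\sum_{k=0}^{n}N_kT_{n-k}$. Adding the dimer-free count then yields $T_{n+2}=g_n=N_n+\sum_{k=0}^{n}N_kT_{n-k}$, as required; the degenerate case $n=0$ is covered by the empty tiling alone.

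The one point that needs genuine care — the main obstacle — is verifying that the split at the leftmost dimer really is a bijection, i.e.\ that no tile straddles either edge of the distinguished dimer $\{i,i+1\}$. This is where I would argue explicitly that a trimer spans three consecutive hexagons $\{j,j+1,j+2\}$, so any trimer meeting hexagon $i$ or $i+1$ would have to contain one of them, contradicting that those two hexagons form the leftmost dimer; the same reasoning rules out a competing dimer on $\{i-1,i\}$ or $\{i+1,i+2\}$. Once this is established, the left block is a clean tiling of $H_{i-1}$ with no dimer and the right block is an unconstrained tiling of $H_{n-i-1}$, and the remaining index bookkeeping (including the absorption of the $T_0=T_1=0$ terms) is routine.
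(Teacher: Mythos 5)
Your proposal is correct and follows essentially the same route as the paper: split off the dimer-free tilings (counted by $N_n$) and classify the remaining tilings by the position of the leftmost slanted dimer, with the left block counted by Narayana numbers, the right block by tribonacci numbers, and the boundary terms absorbed using $T_0=T_1=0$. Your explicit check that no tile can straddle the distinguished dimer is a welcome extra detail the paper leaves implicit, but it does not change the argument.
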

\begin{proof}
We prove this theorem by counting all ways to tile a strip by using
at least one dimer. The proof is analogous to the previous one. The total
number of ways to tile a strip of length $n$ without dimers is
$u_{n,0}=N_n$, hence the number of tilings having at least one dimer
is $T_{n+2}-N_n$. Similarly as before, we can count the same thing by
observing where the leftmost dimer appears. If the leftmost dimer occupies
hexagons $\left\lbrace i,i+1\right\rbrace$, we say that its position
is $i$. So, all possible positions range from $1$ to $n-1$. If a dimer
first appears at position $k$, the leftmost $k-1$ hexagons
can be tiled in $N_{k-1}$ ways. The rest of the strip is of length $n-k-1$
and it can be tiled in $T_{n-k+1}$ ways. By summing over all possible positions
of the leftmost dimer we have
$T_{n+2}-N_{n}=\sum\limits_{k=1}^{n-1}N_{k-1}T_{n-k+1}$.
Some rearranging of indexes and fact that $T_0=T_1=0$ bring us to
$T_{n+2}-N_{n}=\sum\limits_{k=0}^{n}N_{k}T_{n-k}$ and our proof is over.  
\end{proof}
\begin{tm}
For any integer $n\geq 0$ we have the identity
$$T_{n+2}=\sum\limits_{k=1}^{n}P_{k+2}T_{n-k+2}+P_{n+3}.$$
\end{tm}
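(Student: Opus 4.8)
The plan is to mirror the two preceding proofs and count, in two ways, the tilings of the strip $H_n$ that use \emph{at least one monomer}. On one side, the tilings that use no monomer at all are precisely those counted by $v_{n,0}$, and we established above that $v_{n,0}=P_{n+3}$; since the total number of tilings of $H_n$ is $g_n=T_{n+2}$, there are exactly $T_{n+2}-P_{n+3}$ tilings containing at least one monomer.

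On the other side, I would classify those tilings according to the position of the \emph{leftmost} monomer. The key structural point is that in this section every tile (a monomer, a slanted dimer, or a trimer) occupies a contiguous block of consecutively numbered hexagons, so a monomer sitting on hexagon $k$ genuinely splits the strip at positions $k-1$ and $k$. Thus, if the leftmost monomer is on hexagon $k$, the leftmost $k-1$ hexagons carry a tiling that uses dimers and trimers only — countable in $v_{k-1,0}=P_{(k-1)+3}=P_{k+2}$ ways — while the remaining $n-k$ hexagons carry an arbitrary tiling, countable in $g_{n-k}=T_{n-k+2}$ ways. The leftmost monomer may sit on any hexagon $k$ with $1\le k\le n$, and these cases are pairwise disjoint and exhaustive, so $T_{n+2}-P_{n+3}=\sum_{k=1}^{n}P_{k+2}\,T_{n-k+2}$, which is exactly the claimed identity after moving $P_{n+3}$ to the other side. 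I would accompany this with a picture analogous to Figure \ref{First hexagon, k odd}, depicting the split at the leftmost monomer.

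The only thing demanding a little care — and it is the closest to an obstacle, though entirely routine — is the index bookkeeping together with the boundary behavior: one must apply the shift $v_{k-1,0}=P_{k+2}$ correctly, and check that the extreme value $k=n$ leaves a right part of length $0$ contributing $g_0=T_2=1$ (consistent with $T_{n-k+2}$ at $k=n$), and that for $n=0$ the sum is empty and the identity degenerates to $T_2=P_3=1$. A quick verification at $n=1,2,3$ against Tables \ref{tribonacci first values} confirms the formula, so no genuine difficulty remains beyond writing the bijective split cleanly.
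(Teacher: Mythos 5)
Your proposal is correct and follows essentially the same route as the paper's own proof: double-counting the tilings of $H_n$ containing at least one monomer, once as $g_n-v_{n,0}=T_{n+2}-P_{n+3}$ and once by conditioning on the position $k$ of the leftmost monomer, with the left part counted by $v_{k-1,0}=P_{k+2}$ and the right part by $g_{n-k}=T_{n-k+2}$. Your added remarks on the contiguity of tiles justifying the split and on the boundary cases $k=n$ and $n=0$ are correct and only make explicit what the paper leaves implicit.
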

\begin{proof}
Analogously as in two previous theorems, we prove this
theorem by counting all ways to tile a strip by using at least one monomer.
The number of ways to tile a strip of length $n$ with at least one monomer is
$g_n-v_{n,0}=T_{n+2}-P_{n+3}$. Now we can count the same thing by observing
the position of the leftmost monomer. All possible positions for first monomer
range from $1$ to $n$. If it first appears at position $k$, the first part
of the strip, i.e., the leftmost $k-1$ hexagons, can be tiled in
$P_{k+2}$ ways. The rest of the
strip is of length $n-k$ and can be tiled in $T_{n-k+2}$ ways.
By summing over all possible positions of the leftmost monomer we have
$T_{n+2}-P_{n+3}=\sum\limits_{k=1}^{n}P_{k+2}T_{n-k+2}$, which concludes
our proof. 
\end{proof}

\section{Concluding remarks}

In this paper we have considered various ways of tiling a narrow honeycomb
strip of a given length with different types of tiles. We have refined some
previously known results for the total number of tilings of a given type
by deriving formulas for the number of such tilings with prescribed
number of tiles of a given type. We have also considered tilings with
colored tiles and obtained the corresponding formulas. Along the way,
we have provided combinatorial interpretations for some known identities
and established a number of new ones. Also, we have provided closed-form
expressions for several triangles of numbers appearing in the OEIS.

In order to keep this contribution at a reasonable length, we have omitted
many interesting problems related to the considered ones. In particular,
we have not considered any jamming-related scenarios, i.e., the tilings 
which are suboptimal with respect to the number of large(r) tiles. The
existence of connections of our tilings with such problems is indicated
by the appearance of Padovan numbers in both contexts \cite{dosliczub}.
Further, we have not examined statistical properties such as the expected
number of tiles in a random tiling of a strip of a given length in a way
done in ref. \cite{doslicasr}. We have not looked
at the asymptotic behavior of the counting sequences. Each of the mentioned
omissions could be an interesting topic for further research.

Another interesting direction would be to look in more detail at triangles
$c_{n,k}$, $t_{n,k}$, $u_{n,k}$, and $v_{n,k}$. We have shown that their
rows (with one trivial exception) do not have internal zeros. By inspection
of the first few rows of $c_{n,k}$, $t_{n,k}$, and $u_{n,k}$ one can
observe that the rows seem to be also unimodal and even log-concave. It would
be interesting to investigate whether those properties hold for the whole
triangles. Both properties are violated in rows of $v_{n,k}$, but the
violations seem to be restricted to the right end. What happens if the rightmost
three elements are omitted? Also, the position of maximum presents an
interesting challenge. Finally, more interesting identities
could be derived by looking at ascending and descending diagonals of
different slopes in those triangles. We hope that at least some of the
listed problems would be addressed in our future work.

\section*{Acknowledgment}
T. Do\v{s}li\'c gratefully acknowledges partial support
of Slovenian ARRS via grants J1-3002 and P1-0383.

\end{document}